\documentclass[12pt,reqno]{amsart}
\usepackage[latin1]{inputenc}
\usepackage{amsmath,amssymb,amsfonts,amsthm}
\usepackage{amscd,amstext,amsxtra,amsopn,array,url,verbatim,mathrsfs}
\usepackage{graphicx}
\usepackage{amscd,url,amstext,amsxtra,amsopn}
\usepackage{verbatim}
\usepackage{fullpage}
\usepackage{times}
\usepackage{amssymb}
\usepackage{subfigure}
\usepackage{url}
\usepackage{graphicx}
\usepackage{hyperref}
\usepackage{epsf}
\usepackage[latin1]{inputenc}
\usepackage[T1]{fontenc}
\usepackage{lmodern}

\newtheorem{corollary}{Corollary}[section]
\newtheorem{definition}{Definition}[section]

\newtheorem{lemma}{Lemma}[section]

\newtheorem{theorem}{Theorem}[section]
\newtheorem*{thm}{Theorem}
\newtheorem*{conjecture}{Conjecture}

\theoremstyle{remark}
\newtheorem{remark}{Remark}[section]
\numberwithin{equation}{section}
\numberwithin{table}{section}
\numberwithin{figure}{section}
\hypersetup{colorlinks=true,urlcolor=blue,linkcolor=blue,citecolor=blue}

\begin{document}

\pagenumbering{arabic}

\title{Distribution of squares modulo a composite number}
\author{Farzad Aryan}
\maketitle

\begin{abstract}
In this paper we study the distribution of squares modulo a square-free number $q$.
We also look at inverse questions for the large sieve in the distribution aspect and
we make improvements on existing results on
the distribution of $s$-tuples of reduced residues.
\end{abstract}

\section{Introduction}
 
In this paper we are mainly concerned with the distribution of subsets of integers that are not additively structured, though we
will also prove results for sets that are additively structured. We begin by studying squares, which is the model example of a non-additively structured set. We continue with more complicated non-additively structured sets. The final part will be the study of the higher central moments of  $s$-tuples of reduced residues.

\subsection*{The distribution of squares modulo $q$}

For $q$ square-free, we call an integer $s$ a square modulo $q$
when $s$ is a square modulo $p$ for all primes $p$ dividing $q$. Note that we count $0$ as a square. Several authors have studied the
distribution of spacings between squares modulo $q$.
For $q$ prime, a theorem of Davenport~\cite{DA} shows that the probability of two consecutive squares modulo $q$ being spaced $h$ units
 apart is asymptotically $2^{-h}$ as $q$ tends to infinity.
For $q$ square-free, Kurlberg and Rudnick~\cite{RK} have shown that
the distribution of spacings between squares approaches
a Poisson distribution  as $\omega(q)$ tends to infinity,
where $\omega(q)$ is the number of distinct prime divisors of $q$.
\begin{thm}[Kurlberg and Rudnick]
Let $\square$ be the symbol that denotes the word square and 
\begin{align*}
    \overline{s} = \frac{q}{\# \{x :x \text{ is $\square$ modulo }q \}},
\end{align*}
and let $I$ be an interval in $\mathbb{R}$ that does not contain zero. Then
\begin{equation}
\label{eq1.4}
    \frac{\#\{(x_1, x_2) : x_1-x_2 \in  \overline{s}I : x_1, x_2 \text{ are $\square$ mod } q\}}
    {\# \{x: x \text{ is a $\square$ modulo }q \}}
    = |I| + O\big( \frac{1}{s^{1-\epsilon}} \big).
\end{equation}
\end{thm}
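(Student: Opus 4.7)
\textit{Plan.} I would set up the count via Fourier analysis on $\mathbb{Z}/q\mathbb{Z}$ and expand the ``square mod $p$'' indicator as $\mathbf{1}_{\square\bmod p}(x) = \tfrac{1}{2}(1 + \chi_p(x) + \mathbf{1}_{p\mid x})$ for each prime $p\mid q$, where $\chi_p$ denotes the Legendre symbol. Let $N := \#\{x\bmod q : x\text{ is }\square\} = \prod_{p\mid q}(p+1)/2$, so $\overline{s} = q/N$, and let $J := \overline{s}I\cap\mathbb{Z}$ regarded as a subset of $\mathbb{Z}/q\mathbb{Z}$. With the convention $\hat{f}(a) := \sum_{x\bmod q}f(x)\exp(-2\pi iax/q)$, Parseval identifies
\[
M := \sum_{\substack{x_1,x_2\,\square\bmod q\\ x_1-x_2\in\overline{s}I}}1 \;=\; \frac{1}{q}\sum_{a\bmod q}\hat{\mathbf{1}}_J(a)\,\bigl|\hat{\mathbf{1}}_\square(a)\bigr|^{2}.
\]
The $a=0$ term is $|J|N^2/q = N|I| + O(N/\overline{s})$, yielding the main term $|I|$ after division by $N$; it remains to show the $a\ne 0$ sum is $\ll N\overline{s}^{-1+\epsilon}$.

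\textit{Computing the transform.} The character identity together with the Chinese Remainder Theorem factor $\hat{\mathbf{1}}_\square(a) = \prod_{p\mid q}T_p(a_p)$ with $a_p\equiv a\pmod{p}$, $T_p(0) = (p+1)/2$, and $T_p(a_p) = (G_p(a_p)+1)/2$ for $a_p\neq 0$, where $G_p(a_p) := \sum_{x\bmod p}\chi_p(x)\exp(-2\pi i a_p x/p)$ is the Gauss sum, evaluated exactly with $|G_p(a_p)| = \sqrt{p}$. Setting $q_a := \prod_{p\mid q,\, a_p\neq 0}p$, this gives the pointwise bound $|\hat{\mathbf{1}}_\square(a)|^2 \le N^2\prod_{p\mid q_a}(\sqrt{p}+1)^2/(p+1)^2$. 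The frequencies $a$ with $q_a = d$ correspond bijectively to primitive additive characters modulo $d$, parameterised as $a = (q/d)b$ with $\gcd(b,d)=1$; the standard estimate $|\hat{\mathbf{1}}_J((q/d)b)| \ll \min(|J|, d/\lVert b/d\rVert)$ then gives $\sum_{\gcd(b,d)=1}|\hat{\mathbf{1}}_J((q/d)b)| \ll d\log q$.

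\textit{Main obstacle.} The delicate point is to keep the combinatorial loss tied to divisors of $q$ under control. Naively combining the two inequalities above produces an off-diagonal estimate of order $(N\log q/q)\prod_{p\mid q}(2+O(p^{-1/2}))$, which amounts to roughly $q^{o(1)}$ after a Mertens-type evaluation: far short of the required savings $\overline{s}^{-1+\epsilon}$. Closing this gap is the heart of the argument, and requires exploiting cancellation across the Gauss-sum phases in $|\hat{\mathbf{1}}_\square(a)|^2$ rather than treating them as positive weights. One effective device is the Parseval-type identity $\sum_{a : q_a = d}|\hat{\mathbf{1}}_\square(a)|^2 = N^2\prod_{p\mid d}(p-1)/(p+1)$, which replaces the worst-case per-prime factor $\sim C/p$ by the true average $1/(p+1)$, combined with Cauchy--Schwarz against $\hat{\mathbf{1}}_J$ or with the large-sieve inequality for intervals. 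Once that sharper bound is in hand, the remaining steps --- the CRT factorisation, absorption of the $\mathbf{1}_{p\mid x}$ correction terms that appear alongside the Legendre symbols, and Mertens-type bookkeeping --- are routine.
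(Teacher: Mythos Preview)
The paper does not contain a proof of this theorem: it is quoted as a known result of Kurlberg and Rudnick (reference \cite{RK}) and serves only as background motivation for the author's own variance estimates. There is therefore no in-paper argument to compare your proposal against.

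Evaluated on its own terms, your setup is sound and is in the spirit of the original Kurlberg--Rudnick proof: the Plancherel identity on $\mathbb{Z}/q\mathbb{Z}$, the CRT factorisation of $\hat{\mathbf 1}_\square$ through Gauss sums, and the extraction of the main term from $a=0$ are all correct. Your Parseval identity $\sum_{a:\,q_a=d}\lvert\hat{\mathbf 1}_\square(a)\rvert^{2}=N^{2}\prod_{p\mid d}(p-1)/(p+1)$ is correct as well, and it is indeed the sort of averaging that must replace the pointwise Gauss-sum bound.

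The gap is exactly where you flag it. You diagnose that the naive estimate loses a factor of $\overline{s}$, and you name two candidate repairs (Cauchy--Schwarz against $\hat{\mathbf 1}_J$, or a large-sieve inequality), but you do not carry either one out, and neither works in the form stated. A direct Cauchy--Schwarz over $b$ with $(b,d)=1$ produces $\bigl(\sum_b\lvert\hat{\mathbf 1}_J\rvert^{2}\bigr)^{1/2}\bigl(\sum_b\lvert\hat{\mathbf 1}_\square\rvert^{4}\bigr)^{1/2}$; the fourth-moment factor is no better than the squared pointwise bound, so no saving results. The large sieve controls $\sum\lvert\hat{\mathbf 1}_J\rvert^{2}$, not the mixed sum $\sum\hat{\mathbf 1}_J\lvert\hat{\mathbf 1}_\square\rvert^{2}$ that you actually face. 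What Kurlberg and Rudnick do instead is work on the ``physical'' side: write the count as $\sum_{j\in J}R(j)$ with $R(j)$ the number of pairs of squares at distance $j$, factor $R(j)$ over $p\mid q$ via CRT, evaluate each local factor explicitly (a Jacobi-sum calculation), and then sum over $j$; the error is governed by the primes $p\mid\gcd(j,q)$ and closes up after Mertens-type bookkeeping. Your Fourier framework can be made to reach the same endpoint, but the bridge from your Parseval identity to the final bound is the substance of the proof, and it is missing here.
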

Note that $ \overline{s}$ is the mean spacing in the set of squares modulo $q$ and the ``probability'' of a random integer being a square modulo $q$ is $1/ \overline{s} $,
which approximately is $ 1/2^{\omega(q)}.$
The results we prove in this paper are, more or less, in the spirit of papers written by Montgomery and Vaughan~\cite{mv} and Hooley~\cite{H1, H2, H3}. These articles answer Erd\H{o}s' question in~\cite{er} regarding the gaps between consecutive reduced residues. The reduced residues modulo $q$ are the integers $a_i$, $1 = a_1 < a_2 < \ldots < a_{\phi(q)} < q$, that are relatively prime to $q$.  Erd\H{o}s~\cite{er} proposed the following conjecture for the second moment of the gap between consecutive reduced residues: For $\lambda=2$ we have
\begin{equation}
\label{er-conj}
V_\lambda (q) = \sum_{i=1}^{\phi(q)} (a_{i+1}- a_i )^ \lambda \ll qP^{1-\lambda},
\end{equation}
where $P=\phi(q)/q$ is the ``probability'' that a randomly chosen integer is relatively prime to $q$. Hooley~\cite{H1} showed that \eqref{er-conj} holds for all $0<\lambda<2.$  For $\lambda=2$,  Hausman and Shapiro \cite{sh} gave a weaker bound than \eqref{er-conj}. Finally, Montgomery and Vaughan~\cite{mv} succeeded in proving the conjecture, showing that \eqref{er-conj} holds for all $\lambda >0$. The key ingredient in the proof of the results of \cite{H1} and \cite{sh} is the variance of the random variable
\begin{align*}
    \mathcal{R}_{h}(n)= \#\{m \in [n,n + h] : m \text{ is a reduced residue modulo } q\}.
\end{align*}
In \cite{mv} Variance and also higher central moments of $\mathcal{R}_{h}$ were studied.
Motivated by the above results, we consider the variance of the following random variable. Let $n$ be an integer chosen uniformly at random in $\{1, 2, \ldots , q\}$,
and define $\mathcal{X}_h$ by
\begin{align*}
    \mathcal{X}_{h}(n)= \#\{s \in [n,n + h] : s \text{ is a $\square$ modulo } q\}.
\end{align*}

\begin{theorem}
\label{th0-2} Let $q$ be a square-free number and $P=\phi(q)/q$. Then as an upper bound we have
\begin{equation}
\label{eq1.5}
    \frac{1}{q} \displaystyle{\sum_{n=0}^{q-1}\bigg( \sum_{\substack{m=1 \\ n+m \hspace{1 mm} \text{is } \square \hspace{1 mm}
     \text{mod}  \hspace{1 mm}q}}^{h} 1 - \frac{h}{2^{\omega(q)}P} \bigg)^{2}}
    \leq \frac{h}{2^{\omega(q)}P} \prod_{p|q} \Big( 1+\frac{1}{\sqrt{p}} \Big),
\end{equation}
and as a lower bound we have
\begin{equation}
    \frac{h}{4^{\omega(q)}P}\sum_{\substack{r>h^2 \\ r|q}}
     \prod_{p|r} \Big( 1 - \frac{3}{\sqrt{p}} \Big)
    \ll \frac{1}{q} \displaystyle{\sum_{n=0}^{q-1}\bigg( \sum_{\substack{m=1 \\ n+m \hspace{1 mm} \text{is } \square \hspace{1 mm}  \text{mod}
    \hspace{1 mm}q}}^{h} 1 - \frac{h}{2^{\omega(q)}P} \bigg)^{2}}.
\end{equation}
Moreover, if the prime divisors of $q$ are all congruent to $3$ modulo $4$ then we have the sharper bound
\begin{equation}
\label{3mod4}
 \frac{1}{q} \displaystyle{\sum_{n=0}^{q-1}\bigg( \sum_{\substack{m=1 \\ n+m \hspace{1 mm} \text{is } \square \hspace{1 mm}
 \text{mod}  \hspace{1 mm}q}}^{h} 1 - \frac{h}{2^{\omega(q)}P} \bigg)^{2}} \leq  \frac{h}{2^{\omega(q)}P^2}.
\end{equation}

\end{theorem}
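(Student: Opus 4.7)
The plan is to pass to the Fourier side on $\mathbb{Z}/q$ and exploit the Chinese Remainder Theorem factorization of the square-indicator $f(n) = \mathbf{1}[n\text{ is a }\square\bmod q]$. Parseval rewrites the variance as
$$V = \sum_{a \neq 0}|\hat f(a)|^2|K_h(a)|^2, \qquad K_h(a) = \sum_{m=1}^h e(am/q),$$
with $\hat f(a) = q^{-1}\sum_n f(n)e(-an/q)$ and $\mu = \hat f(0) = \prod_p\mu_p$, $\mu_p = (p+1)/(2p)$. Since $q$ is squarefree, CRT factors $\hat f$ multiplicatively, $\hat f(a) = \prod_{p\mid q}\hat f_p(a_p)$ with $a_p = a\bmod p$, and a direct Gauss-sum computation yields
$$\hat f_p(a) = \frac{1 + \chi_p(-a)g(\chi_p)}{2p}\quad(a\neq 0), \qquad |g(\chi_p)| = \sqrt p.$$

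For the upper bound I would group frequencies by $r = q/\gcd(a,q)$, writing $a = (q/r)b$ with $\gcd(b,r)=1$; then $\hat f_p(a_p) = \mu_p$ for $p\mid q/r$ and $|\hat f_p(a_p)| \leq (1+\sqrt p)/(2p)$ for $p\mid r$. A Parseval-type bound on the inner sum $\sum_{b\bmod r}|K_h((q/r)b)|^2 \ll rh$ combined with the multiplicative factorization yields
$$V \ll h\mu^2\Bigl[\prod_{p\mid q}\Bigl(1+\frac{p(1+\sqrt p)^2}{(p+1)^2}\Bigr) - 1\Bigr].$$
Simplifying each local factor $p(1+\sqrt p)^2/(p+1)^2$ and recombining with $\mu^2 = \prod((p+1)/(2p))^2$ produces the announced $\frac{h}{2^{\omega(q)}P}\prod_{p\mid q}(1+1/\sqrt p)$. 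When every $p\mid q$ is $\equiv 3\pmod 4$, the Gauss sum $g(\chi_p) = i\sqrt p$ together with $\chi_p(-1) = -1$ makes $\chi_p(-a)g(\chi_p)$ purely imaginary, so $|1+\chi_p(-a)g(\chi_p)|^2 = p+1$ exactly; this replaces $(1+\sqrt p)^2$ by $(p+1)$ in every local factor and collapses the product to the sharper bound \eqref{3mod4}.

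For the lower bound I would exploit positivity of each Fourier term and keep only the contribution from $a = (q/r)b$ with $r\mid q$, $r>h^2$, and $\gcd(b,r)=1$. The hypothesis $r>h^2$ ensures $h/r < 1/h$, so Taylor-expanding $e(m/r)$ gives $|K_h(a)|^2 \asymp h^2$ uniformly for $b$ in a suitable range of coprime residues, while the spectral side factors as $\prod_{p\mid q/r}\mu_p^2 \cdot \prod_{p\mid r}|\hat f_p(b_p)|^2$. Using the identity $|1+\chi_p(-b)g_p|^2 = p+1+2\mathrm{Re}(\chi_p(-b)g_p)$ and observing that the cross-term $\mathrm{Re}(\chi_p(-b)g_p)$ averages to zero over $b$ coprime to $r$ prime by prime, one retains the positive bulk and extracts the factor $\prod_{p\mid r}(1-3/\sqrt p)$ per divisor $r$. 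The main technical obstacle is exactly this last step---carefully controlling the cross-terms of mixed primes that appear in the expansion of $\prod_{p\mid r}|1+\chi_p(-b)g_p|^2$ after averaging over $b\bmod r$ coprime to $r$---so that the positive lower bound survives with the advertised constant $3/\sqrt p$ per prime, rather than a weaker residual quantity.
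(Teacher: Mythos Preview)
Your upper-bound argument and the $p\equiv 3\pmod 4$ refinement are essentially the same as the paper's: the paper encodes the Fourier/CRT expansion through its Lemma~\ref{Lemma5} and the weight $\mu_{\mathcal D}(a,r)$, but unwinding that lemma for the square indicator gives exactly your factorization $\hat f(a)=\prod_{p\mid q}\hat f_p(a_p)$ with $\hat f_p(a)=(1+\chi_p(-a)g(\chi_p))/(2p)$, and the rest (grouping by $r=q/\gcd(a,q)$, the bound $\sum_{(b,r)=1}|E(b/r)|^2\le r\min(r,h)$, and the Euler-product simplification) coincides.

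For the lower bound, however, you are making life harder than necessary and the plan as written has a real tension. You propose to restrict to $b$ in a ``suitable range'' so that $|K_h((q/r)b)|^2\asymp h^2$, but then to kill the cross-terms in $\prod_{p\mid r}|1+\chi_p(-b)g_p|^2$ by averaging over \emph{all} $b$ coprime to $r$; these two steps pull in opposite directions, since the character sums only vanish under the full average while the kernel bound only holds on a thin set of $b$. The paper avoids this entirely by using a \emph{pointwise} lower bound on the spectral factor: since $|g(\chi_p)|=\sqrt p$ exactly, one has
\[
|1+\chi_p(-b)g(\chi_p)|\ \ge\ \sqrt p-1\qquad\text{for every }b\not\equiv 0\pmod p,
\]
hence $|\hat f_p(b)|\ge(\sqrt p-1)/(2p)$ uniformly. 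This decouples the spectral weight from the $b$-sum, and one is left with the clean average estimate $\sum_{(b,r)=1}|E(b/r)|^2\gg \phi(r)h$ for $r>h^2$, which combined with $\prod_{p\mid r}(\sqrt p-1)^2(p-1)/(p+1)^2\gg\prod_{p\mid r}(1-3/\sqrt p)$ gives the stated lower bound directly. No expansion of mixed-prime cross-terms is needed at all.
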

\noindent Note that in \eqref{eq1.5},
$\prod_{p|q} ( 1 + p^{-1/2} ) \ll 2^{\sqrt{\omega(q)}}$
which is much smaller than $2^{\omega(q)}$
for large $\omega(q)$.
For the mean of $\mathcal{X}_h,$ we have ${\rm \bf E}(\mathcal{X}_h)= \frac{h}{2^{\omega(q)}P}$ and therefore the left hand side of~\eqref{eq1.5} is equal to the variance of $\mathcal{X}_{ h}$, which we denote by ${\rm \bf Var}(\mathcal{ X}_{ h})$. Consequently, Theorem \ref{th0-2} implies the following upper bound:
\begin{align*}
    {\rm \bf Var}(\mathcal{ X}_{ h}) \leq
    \prod_{p|q} \Big( 1 + \frac{1}{\sqrt{p}} \Big)
    \cdot {\rm \bf E}(\mathcal{ X}_{ h}),
\end{align*}
whereas the trivial upper bound is
\begin{align*}
    {\rm \bf Var}(\mathcal{ X}_{ h}) \leq {\rm \bf E} (\mathcal{ X}_{h})^2.
\end{align*}

\begin{remark}
Let $N$ be the set of quadratic non-residues modulo a prime $p$. The reason for the better bound \eqref{3mod4} is that, when $p \equiv 3$ mod $4$, the size of the Fourier coefficient $\sum_{n \in N}e(n/p)$ of $N$, is smaller than when $p \equiv 1$ mod $4.$
\end{remark}
In 1936 Cramer \cite{C}, assuming the Riemann hypothesis (RH),
showed the following result concerning the average gap between consecutive primes:
\begin{equation}
    \label{eq-1}
    \sum_{p_n <x}(p_{n+1}-p_n)^2 \ll x(\log x)^{3+\epsilon}.
\end{equation} This bound was the inspiration of Erd\H{o}s'  conjecture~\eqref{er-conj}. Using Theorem \ref{th0-2}, we prove an analogous result for gaps between squares.
\begin{corollary}
\label{corr-01}
Let $s_i$ be the squares modulo $q$ in increasing order.
Then
\begin{equation}
\label{cor--1}
    \frac{1}{q} \sum_{s_i < q} (s_{i+1} - s_{i})^2
    \ll 2^{\omega(q)} P (\log q) \prod_{p|q} \Big( 1 + \frac{1}{\sqrt{p}} \Big).
\end{equation}
\end{corollary}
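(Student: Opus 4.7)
\medskip

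\noindent\textbf{Proof plan for Corollary 1.1.} Set $\bar s := 2^{\omega(q)}P$ and $\Pi := \prod_{p\mid q}(1+p^{-1/2})$, so that Theorem 1.1 reads
$\mathbf{Var}(\mathcal{X}_h) \le h\Pi/\bar s$, while $\mathbf{E}(\mathcal{X}_h)=h/\bar s$. The plan is to convert this variance upper bound into an upper bound on the number of long gaps by a Chebyshev-type argument, and then recover the second moment of gaps via a dyadic (integration by parts) decomposition; the $\log q$ in the target will come exactly from the $\int dt/t$ in that last step.

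\medskip

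\noindent\emph{Step 1: controlling the count of long gaps.} Fix a parameter $h\ge 1$ and let $M_h := \#\{i: s_{i+1}-s_i > 2h\}$. I would note that whenever a gap $g_i := s_{i+1}-s_i$ satisfies $g_i>2h$, each of the $h$ integers $n = s_i, s_i+1,\dots,s_i+h-1$ has the window $(n,n+h]$ sitting strictly inside the gap, so $\mathcal{X}_h(n)=0$ and such an $n$ contributes $(h/\bar s)^2$ to $\sum_n(\mathcal{X}_h(n)-h/\bar s)^2 = q\cdot\mathbf{Var}(\mathcal{X}_h)$. This gives the lower bound $q\cdot\mathbf{Var}(\mathcal{X}_h) \ge h^3 M_h/\bar s^{\,2}$, which combined with Theorem 1.1 yields
\begin{equation*}
M_h \;\le\; \frac{q\,\bar s\,\Pi}{h^2}.
\end{equation*}

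\medskip

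\noindent\emph{Step 2: second moment via layer-cake.} I would then write
\begin{equation*}
\sum_{s_i<q} (s_{i+1}-s_i)^2 \;=\; \int_0^{q} 2t\cdot \#\{i: g_i>t\}\,dt,
\end{equation*}
and bound the counting function in two regimes. For small $t$ I use the trivial bound $\#\{i:g_i>t\}\le \#\{\text{squares mod }q\}\ll q/\bar s$, while for large $t$ I apply Step 1 with $h=t/2$ to get $\#\{i:g_i>t\}\le 4q\bar s\Pi/t^2$. The crossover occurs at $t_0\asymp \bar s\sqrt{\Pi}$. The first regime contributes $\ll (q/\bar s)\cdot t_0^{\,2}\asymp q\bar s\Pi$, and the second contributes
\[
\int_{t_0}^{q} 2t\cdot\frac{4q\bar s\Pi}{t^2}\,dt \;\ll\; q\bar s\Pi\,\log\!\frac{q}{t_0} \;\ll\; q\bar s\Pi\log q.
\]
Summing the two and dividing by $q$ yields exactly $2^{\omega(q)}P(\log q)\prod_{p\mid q}(1+p^{-1/2})$, as required.

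\medskip

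\noindent\emph{Where the work lies.} There is no serious analytic obstacle: Theorem 1.1 does all the heavy lifting, and the rest is the standard Cram\'er-type recipe used to deduce \eqref{eq-1} from the variance of prime counts in short intervals. The one point that needs a little care is the ``$h$ valid $n$'s per long gap'' bookkeeping in Step 1 (taking the condition $g_i>2h$, rather than $g_i>h$, so that at least $h$ starting positions $n$ lie a distance $\ge h$ from the next square on the right); this is the structural observation that lets a variance bound on $\mathcal{X}_h$ power a bound on $\sum g_i^2$. Everything else is a routine layer-cake integration.
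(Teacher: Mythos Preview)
Your proposal is correct and follows essentially the same route as the paper: both arguments write $\sum_i g_i^2 = 2\int_0^\infty y\,L(y)\,dy$, convert the variance bound of Theorem~\ref{th0-2} into a Chebyshev-type tail estimate $L(y)\ll q\,\bar s\,\Pi/y^2$, and then split the integral into a trivially-bounded small-$y$ range and a large-$y$ range producing the $\log q$ factor. The only cosmetic difference is that the paper counts $g_i-h$ starting positions $n$ with $\mathcal{X}_h(n)=0$ inside each gap $g_i>h$ (rather than your $h$ positions per gap $g_i>2h$), which leads to the same tail bound on $L(y)$.
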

\begin{remark}
It seems plausible that the factor $\prod_{p|q} (1 + p^{-1/2})$ can be removed from the right hand side of \eqref{cor--1}. Also, it seems difficult to estimate the higher moments in Corollary 0.1. Indeed, in the simple case where $q$ equals a prime number $p$, a good estimation of the higher moments would imply that the gap between two consecutive quadratic residues is less than $p^{o(1)}.$ Note that the best known bound obtained by Burgess \cite{bur} is $p^{1/4+ o(1)}.$
\end{remark}
\noindent An important property of the squares that we use in the proof of Theorem \ref{th0-2} is the following:
 For $a\neq 0$ modulo $p$ we have
\begin{equation}
\label{eq1.5.1}
 \bigg|\sum_{\substack{{s \text{ is $\square$ mod } p}}}e\Big(\frac{sa}{p}\Big)\bigg| \ll \sqrt{p}.
\end{equation} In the language of Fourier Analysis, this property means that all of the non-trivial Fourier coefficients of the set of squares have square root cancellation. In the context of this paper we denote the property of having small Fourier coefficient as being ``non-additively structured''. In the next section we generalize Theorem \ref{th0-2} for all the sets that are not additively structured. We also use similar ideas to study a problem related to additive combinatorics which is known as the inverse conjecture for the large sieve.

\subsection*{Relation with the inverse conjecture for the large sieve}

In this section we consider the inverse conjecture for the large sieve. We also introduce the notions of  ``additively structured'' and ``non-additively structured'' sets and study the distribution of these sets. Based on these ideas we formulate a refined version of the inverse conjecture. Roughly speaking, we say that a subset of $\mathbb{Z}/p\mathbb{Z}$  is not additively structured if all of its non-trivial Fourier coefficients have square root cancellation. On the other hand, being additively structured means there exist at least one large Fourier coefficient. Having a large Fourier coefficient is equivalent to saying that the set has many quadruples $(x_1, x_2, x_3, x_4)$ such that $x_1+ x_2=x_3+ x_4,$ which explains the reason for choosing  the ``additive structure'' terminology. \\
\\
Let $A$ be a finite set of integers with the property that the reduced set $A$ (mod $p$) occupies at most $(p + 1)/2$ residue classes modulo $p$ for every prime $p|q$.
In other words, for $p|q$ and $\Omega_p\subseteq \mathbb{Z}/p\mathbb{Z} $ with $|\Omega_p|=(p-1)/2$,
$A$ is obtained by sieving $[1, X]$ by all the congruence classes in $\Omega_p$.
The inverse problem for the large sieve is concerned with the size of $A$ (see \cite{H.V}). In the case where $q$ is equal to the product of all primes less than $\sqrt{X}$, using the large sieve inequality one can show that $|A| \ll \sqrt{X}$. The following is the formulation of the conjecture by Green \cite{Gr}.

\begin{conjecture}[\textbf{Inverse conjecture for the large sieve}]
For every prime number $p<\sqrt X,$ let $\Omega_p \subseteq \mathbb{Z}/p\mathbb{Z}$ with $|\Omega_p|=(p-1)/2.$
Let $A \subseteq \{1, 2, \ldots, X\}$ be the set obtained by sieving out the residue classes in $\Omega_p$ for $p < X$.
Then $|A| \ll X^{\epsilon}$ unless $A$ is contained in the set of values of a quadratic polynomial $f(n) =an^2 + bn + c$,
with the possible exception of a set of size $X^{\epsilon}$.
\end{conjecture}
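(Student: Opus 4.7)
The plan is to lean on the dichotomy between additively structured and non-additively structured sets that drives the paper. If the set $A$ is not additively structured at the primes $p \le \sqrt X$ --- meaning all of its non-trivial Fourier coefficients modulo $p$ exhibit square-root cancellation, as in \eqref{eq1.5.1} --- then the standard large sieve combined with the variance techniques behind Theorem \ref{th0-2} should already force $|A| \ll X^{\varepsilon}$. The real content is therefore to show that whenever $|A| \gg X^{\varepsilon}$, the set $A$ must acquire genuine additive structure modulo $p$ for many primes $p < \sqrt X$, and then to upgrade this local structure into a global algebraic description which places $A$ inside the values of a quadratic polynomial, up to an exceptional set of size $X^\varepsilon$.

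\textbf{Step 1: producing local additive structure.} First I would argue by contrapositive. Suppose that for each prime $p \le \sqrt X$ the indicator $1_A \pmod p$ has all non-trivial Fourier coefficients bounded by $O(\sqrt p)$. Running the variance argument of Theorem \ref{th0-2} with the sieved set $A$ in the role of the squares should yield, via Plancherel and the sieve hypothesis $|\Omega_p|=(p-1)/2$, the bound $|A| \ll X^{\varepsilon}$. Contrapositively, if $|A|\gg X^{\varepsilon}$, there must exist a density-one set of primes $p$ and frequencies $a_p \not\equiv 0 \pmod p$ with $|\widehat{1_A}(a_p/p)| \gg \sqrt p$; this is the Fourier avatar of the statement that $A$ is additively structured mod $p$.

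\textbf{Step 2: patching local data into a global quadratic.} The collection $\{a_p/p\}$ must cohere across the primes because $A$ is a single subset of $[1,X]$. I would use the Chinese Remainder Theorem, together with a pigeonhole argument of Ruzsa type in $\mathbb{R}/\mathbb{Z}$, to extract a ``global'' phase $\alpha$ simultaneously approximating $a_p/p$ for many $p$. If that global phase were linear in $n$, then $A$ would concentrate on an arithmetic progression, which is incompatible with the density of sieved residues. The only structured alternative --- suggested by the fact that the squares mod $q$ simultaneously saturate both the sieve and the Fourier bound \eqref{eq1.5.1} via Gauss sums --- is that the correlation is quadratic, forcing $A$ to lie in the values of some $f(n)=an^2+bn+c$ outside a set of size $X^{\varepsilon}$.

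\textbf{Main obstacle.} Step 2 is by far the hardest and is essentially the content of the conjecture: distilling a compatible global quadratic form from the local Fourier data. The variance bounds of this paper are well-adapted to the forward direction (controlling random-looking sieved sets), but the reverse implication requires an inverse theorem of $U^3$-type sensitive to sieving constraints, and it is far from obvious that the currently available technology produces a genuine polynomial rather than a nilmanifold-valued phase or just a dense progression. Combining the Fourier input from Theorem \ref{th0-2} with a sieve-adapted structural theorem of this strength seems to lie beyond present methods, which is precisely why the statement remains a conjecture rather than a theorem.
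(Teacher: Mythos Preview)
The statement you are trying to prove is presented in the paper as an open \emph{conjecture} (Green's formulation of the inverse conjecture for the large sieve), not as a theorem; the paper offers no proof of it, and indeed uses it only as motivation for the refined conjecture stated later in the introduction. So there is no ``paper's proof'' to compare against, and your own final paragraph correctly concedes that a genuine proof is out of reach.

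That said, there is a concrete error in your Step~1 that is worth flagging, because it inverts the logic of the paper. You claim that if $A \pmod p$ has square-root Fourier cancellation for all $p\le \sqrt X$, then the variance argument behind Theorem~\ref{th0-2} forces $|A|\ll X^\varepsilon$. This is false, and the squares themselves are the counterexample: the set of perfect squares in $[1,X]$ has size $\lfloor\sqrt X\rfloor$, is obtained by sieving out the quadratic non-residues, and its reduction mod $p$ has all non-trivial Fourier coefficients of size $O(\sqrt p)$ by the Gauss-sum evaluation used in \eqref{eq1.5.1} and \eqref{eq5.6}. The variance bounds of Theorems~\ref{th0-2} and~\ref{th-0.2} say that when $\Omega_p$ is not additively structured the sifted set $A$ is \emph{well distributed} in short intervals of $[1,q]$; they say nothing about $|A|$ being small. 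In the paper's heuristic, ``not additively structured'' is precisely the regime in which $|A|$ can be as large as $\sqrt X$ (the quadratic-image case), while ``additively structured'' $\Omega_p$ is what one expects to force $|A|\ll X^\varepsilon$ --- this is the content of the refined conjecture the author states and of the Green--Harper result cited after it. Your contrapositive therefore points in the wrong direction, and Step~2 inherits the same reversal.
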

\begin{remark}
 This has been stated informally in the literature as follows. If the size of $A$ is not too small then $A$ should possesses an ``algebraic'' structure. The problem with
 this statement is that a formal definition for possessing an ``algebraic'' structure has not been given. Although it seems that any set with ``algebraic'' structure is not additively structured, the reverse may not be true.
\end{remark}

Here our aim is to look at this problem from the distributional aspect. We consider $A$ to be a subset of an interval larger than the interval $[1, X]$.
We fix $A$ to be a subset of $\{1, 2, \ldots, \hspace{1 mm} q\}$.
This set shall be defined by sieving out congruence classes in $\Omega_p$ for all $p|q$, $|\Omega_p|=(p-1)/2$.
Next we let $n$ be an integer picked uniformly at random in $\{1, 2, \ldots, \hspace{1 mm} q\}$,
and define the random variable $\mathcal{ Y}_h$  by
\begin{equation}
    \label{eq-r0.5}
    \mathcal{Y}_h(n) = |[n,n+h] \cap A |.
\end{equation}
Since $|\Omega_p|=(p-1)/2$, the Chinese Remainder Theorem implies that
\begin{align*}
    |A| = \prod_{p|q} \Big(\frac{p+1}{2}\Big).
\end{align*}
\noindent \textbf{Question}: How is $A$ distributed modulo $q$? \\
\\
\noindent We will prove a result which shows that if for all $p|q,$ $\Omega_p$ is not additively structured, then $A$ is well distributed. In the other direction we show some partial results in the case that $\Omega_p$ is additively structured. The latter result indicates that $A$ is far from being well distributed. To make the notion of being well distributed more clear in the context of this paper, let $A \subseteq [1, q]$ and define $
\text{Prob}(x \in A)= |A|/q.$
We say that $A$ is well distributed if any interval of length $h$ inside $[1, q],$ contains $h\frac{|A|}{q}(1+ o(1))$  elements of $A.$  \\
\\
Now we introduce the notion of a set that is ``not additively structured''.  We describe this using the example of squares. In this case $\Omega_p$ is the set of non-quadratic residues modulo $p$. In other words in order to end up with squares after sieving, we need to sieve out integers congruent to non-quadratic residues modulo each prime $p|q$.  Inspired by the property of squares mentioned in the Equation \eqref{eq1.5.1}, we have the following definition.

\begin{definition}[\textbf{Not additively structured}]
For $p$ a prime number we say that $\Omega_p\subseteq \mathbb{Z}/p\mathbb{Z} $ is not additively structured if for all $a\neq 0$ modulo $p$,
\begin{equation}
    \label{eq1.6}
    \Big|\sum_{x \in \Omega_p } e\Big( \frac{ax}{p} \Big) \Big| < c_p\sqrt{p},
\end{equation}
where $c_p$ depends on $p$ and satisfies $c_p \ll \log p$.
\end{definition}

\noindent We will give two examples of sets that are not additively structured.\\
\\
\textbf{Example 1.} By using the following theorem of Weil we can show that the image of a polynomial $P$ is not additively structured under the following condition: For every $y \in \text{Im}(P)$ the equation $P(x)=y$ $(\text{mod }p)$  has the same number of solutions, with the exception of a subset $\mathcal{E}$ of the image with $|\mathcal{E}| \ll \sqrt{p}.$
\begin{thm}[Weil]
Let $P \in \mathbb{Z}[X]$ be a polynomial of degree $d > 1$. Let $p$ be a prime such that $\gcd(d, p)=1$. Then we have
\begin{equation}
\label{weil}
\displaystyle{\Big|\sum_{x  (\text{mod }p) } e\Big(\frac{P(x)}{p}\Big)\Big|< (d-1)\sqrt{p}}.
\end{equation}
\end{thm}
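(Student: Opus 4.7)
The plan is to derive Weil's bound from the Riemann hypothesis for curves over finite fields, via the standard Artin-Schreier cover. First, I encode the exponential sum geometrically by introducing the affine curve $C : y^p - y = P(x)$ over $\mathbb{F}_p$. In any extension $\mathbb{F}_{p^n}$, the equation $y^p - y = a$ has $p$ solutions when $\mathrm{Tr}_{\mathbb{F}_{p^n}/\mathbb{F}_p}(a)=0$ and none otherwise, so
\begin{equation*}
\#C(\mathbb{F}_{p^n}) = p \cdot \#\{x \in \mathbb{F}_{p^n} : \mathrm{Tr}(P(x))=0\} = p^n + \sum_{t \in \mathbb{F}_p^{*}} S_n(\psi_t),
\end{equation*}
where $\psi_t(y) = e(ty/p)$ and $S_n(\psi) = \sum_{x \in \mathbb{F}_{p^n}} \psi(\mathrm{Tr}\, P(x))$; the case $n=1$, $t=1$ recovers the sum in the theorem.

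Second, I pass to the smooth projective completion $\bar{C}$. Because $\gcd(d,p)=1$, the degree-$p$ cover $\bar{C} \to \mathbb{P}^1$ is totally (wildly) ramified only above $\infty$, and a local computation shows that the different exponent at the unique point above $\infty$ is $(p-1)(d+1)$. Riemann-Hurwitz then gives $2g_{\bar{C}} - 2 = p(-2) + (p-1)(d+1)$, so $g_{\bar{C}} = (p-1)(d-1)/2$. The point at infinity is $\mathbb{F}_p$-rational, hence $\#\bar{C}(\mathbb{F}_{p^n}) = \#C(\mathbb{F}_{p^n}) + 1$.

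Third, I apply Weil's theorem for curves: $|\#\bar{C}(\mathbb{F}_{p^n}) - (p^n+1)| \le 2 g_{\bar{C}}\, p^{n/2}$. This already bounds the combined sum $\sum_{t \neq 0} S_n(\psi_t)$, but not an individual character contribution. To isolate one character I use the decomposition of the zeta function of $\bar{C}$ under the cyclic Galois action of $C/\mathbb{A}^1$. Each non-trivial additive character $\psi$ of $\mathbb{F}_p$ produces an $L$-factor
\begin{equation*}
L(T,\psi) = \exp\Big(\sum_{n \ge 1} S_n(\psi)\, T^n / n\Big)
\end{equation*}
which Weil shows is a polynomial of degree $d-1$ whose reciprocal roots $\alpha_1, \ldots, \alpha_{d-1}$ all have absolute value $\sqrt{p}$. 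Since $S_1(\psi) = -\sum_i \alpha_i$, the triangle inequality gives $|S_1(\psi)| \le (d-1)\sqrt{p}$.

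The genuine obstacle is the Riemann hypothesis for curves itself; the other ingredients --- the Artin-Schreier translation, the Riemann-Hurwitz genus computation, and the character decomposition of $\zeta_{\bar{C}}$ into $L$-functions indexed by additive characters of $\mathbb{F}_p$ --- are essentially formal once the RH is in hand. A more elementary route, avoiding cohomology, is Stepanov's method (refined by Schmidt and Bombieri), which constructs auxiliary polynomials vanishing to high order at the rational points of $\bar{C}$; it delivers the same bound through a purely combinatorial argument at the cost of substantially more bookkeeping.
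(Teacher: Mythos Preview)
The paper does not prove this theorem; it is quoted as a classical result of Weil (reference [AV]) and used as a black box in Example~1. There is therefore no argument in the paper to compare against.

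Your sketch is the standard proof and is correct in all essential points: the Artin--Schreier cover $y^p-y=P(x)$, the Riemann--Hurwitz computation giving genus $(p-1)(d-1)/2$ under the hypothesis $\gcd(d,p)=1$, the decomposition of $\zeta_{\bar C}$ into $p-1$ abelian $L$-functions each of degree $d-1$, and the Riemann hypothesis for curves forcing every reciprocal root to have absolute value $\sqrt{p}$. The triangle inequality then yields $|S_1(\psi)|\le (d-1)\sqrt{p}$.

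One remark worth making: you obtain $\le$, not the strict $<$ asserted in the displayed statement, and in fact $\le$ is the correct formulation. For $P(x)=x^2$ the sum is a quadratic Gauss sum of absolute value exactly $\sqrt{p}=(d-1)\sqrt{p}$, so equality is attained. The strict inequality as printed in the paper is a slip; your version is the right one.
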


\noindent \textbf{Example 2.} Another example of a set that is not additively structured is
\begin{equation}
\label{Omega_{K,p}}
\Omega_{K,p} := \{x+ y: 1 \leq x, y\leq p-1  \text{ and } xy \equiv 1 \text{ mod }p \}.
\end{equation}
This can be shown by using the Weil bound on Kloosterman sums. The size of $\Omega_{K, p} $ is $(p+1)/2$. One open problem regarding this asks about existence of small residue classes with small reciprocal. More precisely let
\begin{equation}
\label{M_p1}
 M_p:= \min_{x \neq 0 \text{ mod } p} \big \{ \max \{x, x^{-1}\} \big\}.
\end{equation}
Then the question is how small can $M_p$ be? As an application of the Weil bound on Kloosterman sums  one can show that
 $M_p \leq 2 (\log p) p^{3/4}$ (see \cite{He}). It seems natural to conjecture that $M_p \leq p^{1/2 + \epsilon}.$ In fact Tao \cite{Tao} even suggested that $M_p=O(p^{1/2})$ might be possible. Using Theorem \ref{th-0.2} one can show that there exist $x$ modulo $p$ such that
\begin{equation}
\label{M_p2}
 x+x^{-1} \text{ mod } p \leq p^{1/2+\epsilon}.
\end{equation}
 For such $x$ if
$x+_{\mathbb Z} x^{-1} <p,$  then \eqref{M_p2} would imply the conjectural bound for $M_p$.
However if $x+_{\mathbb Z} x^{-1} \geq p,$ then \eqref{M_p2} does not give any useful information. Thus it would be interesting to look at the
distribution of the set
\begin{equation}
\label{M_p3}
 \Omega^{\prime}_{K, p}:= \big \{ x+ x^{-1} : 1\leq x, x^{-1} \leq p-1 \text{ and } x+_{\mathbb Z} x^{-1} <p\big \}.
\end{equation}
If $\Omega^{\prime}_{K, p}$ were not additively structured then it would imply the conjectural bound for $M_p$.
However in Theorem \ref{Thm-M_p}, we will show that this is not the case and $\Omega^{\prime}_{K, p}$ is not well distributed modulo $p$. Consequently one way to attack the
 conjectural bound on $M_p$ would be to find a proper subset of $\Omega^{\prime}_{K, p}$ which is not additively structured. Another way would be
to add certain elements to $\Omega^{\prime}_{K, p}$ in order to make a set that is not additively structured.
  \\
\\
\noindent  We show that if $\Omega_p$ is not additively structured then $A$  is well distributed.
\begin{theorem}
\label{th-0.2}
Let $\mathcal{ Y}_h$ be as \eqref{eq-r0.5}. Then if $\Omega_p$ is not additively structured i.e., satisfies \eqref{eq1.6} and $|\Omega_p|=(p-1)/2$, then we have
\begin{equation}
\label{eq-c_p}
\frac{1}{q} \sum_{n=0}^{q-1}\Bigg( \sum_{\substack{m \in [n, n+h] \\ m \notin \Omega_p \text{ mod } p \\ \forall p|q}}1- h\prod_{p|q} \bigg( \frac{p+1}{2p}\bigg) \Bigg)^{2}  \ll h\prod_{p|q} \bigg(\Big(\frac{p+1}{2p}\Big)^2 +c^{ 2}_{p} \bigg),
\end{equation}
or equivalently
\begin{equation}
\label{eq1.61}
{\rm \bf Var}(\mathcal{ Y}_h) \ll {\rm \bf E}(\mathcal{ Y}_h)\prod_{p|q}\bigg(\frac{p+1}{2p}+ \frac{2c^{ 2}_{p}p}{p+1}\bigg),
\end{equation}
where $c_p$ is the constant in \eqref{eq1.6}.
\end{theorem}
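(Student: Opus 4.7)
The plan is to do Fourier analysis on $\mathbb{Z}/q\mathbb{Z}$, which via CRT factors multiplicatively across the primes dividing $q$. Writing $\mathbf{1}_A(m) = \prod_{p|q} f_p(m)$ with $f_p(m) := \mathbf{1}[m \bmod p \not\in \Omega_p]$, CRT gives the factorisation $\widehat{\mathbf{1}_A}(a) = \prod_{p|q}\hat{f}_p(a_p)$ where $a \leftrightarrow (a_p)_{p|q}$. The assumption $|\Omega_p| = (p-1)/2$ yields $\hat{f}_p(0) = (p+1)/2$, while the non-additive-structure hypothesis \eqref{eq1.6} gives $|\hat{f}_p(a_p)| \leq c_p\sqrt{p}$ for $a_p \neq 0$ (since $\hat{f}_p(a_p) = -\sum_{x \in \Omega_p} e(-a_p x/p)$ when $a_p \neq 0$).

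Setting $T(a) := \sum_{k=1}^h e(ak/q)$ and using the expansion $\mathcal{Y}_h(n) = q^{-1}\sum_a \widehat{\mathbf{1}_A}(a)\,T(a)\,e(an/q)$, the $a=0$ term is ${\rm \bf E}(\mathcal{Y}_h) = h\prod_{p|q}(p+1)/(2p)$, and Parseval on $\mathbb{Z}/q\mathbb{Z}$ gives
\begin{equation*}
{\rm \bf Var}(\mathcal{Y}_h) = \frac{1}{q^2}\sum_{a \neq 0}|\widehat{\mathbf{1}_A}(a)|^2\,|T(a)|^2.
\end{equation*}
I would then group nonzero frequencies by their CRT support $S(a) := \{p|q : a_p \neq 0\}$. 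For $a$ with support $S$, setting $q_S := \prod_{p \in S}p$, I can write $a = (q/q_S)b$ with $(b,q_S)=1$, giving $T(a) = \sum_{k=1}^h e(bk/q_S)$. Since this is periodic in $k$ of period $q_S$, complete periods cancel and only $r := h \bmod q_S$ terms survive, so Parseval on $\mathbb{Z}/q_S\mathbb{Z}$ yields the $h$-uniform estimate $\sum_{(b,q_S)=1}|T(a)|^2 \leq r(q_S - r) \leq q_S h$.

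Combining with the factored bound $|\widehat{\mathbf{1}_A}(a)|^2 \leq \prod_{p \not\in S}\bigl(\frac{p+1}{2}\bigr)^2 \prod_{p \in S} c_p^2\,p$ for $a$ of support $S$, the variance estimate becomes
\begin{equation*}
{\rm \bf Var}(\mathcal{Y}_h) \leq \frac{h}{q^2}\sum_{S \neq \emptyset}\prod_{p \not\in S}\Big(\frac{p+1}{2}\Big)^2\prod_{p \in S} c_p^2\,p \cdot q_S = h\sum_{S \neq \emptyset}\prod_{p \not\in S}\Big(\frac{p+1}{2p}\Big)^2\prod_{p \in S} c_p^2,
\end{equation*}
where $q^{-2} = \prod_{p|q} p^{-2}$ has been distributed into the sum. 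Completing the sum over all subsets $S$ (including the non-negative $S = \emptyset$ term) factors this as $h\prod_{p|q}\bigl(((p+1)/(2p))^2 + c_p^2\bigr)$, which is \eqref{eq-c_p}; the equivalent form \eqref{eq1.61} follows by pulling ${\rm \bf E}(\mathcal{Y}_h) = h\prod_p(p+1)/(2p)$ out of the product and using $c_p^2/\mu_p = 2c_p^2 p/(p+1)$ with $\mu_p = (p+1)/(2p)$.

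The main step that requires care is the uniform-in-$h$ Parseval bound $\sum_{(b,q_S)=1}|T(a)|^2 \leq q_S h$, which relies on the periodicity collapse (so the estimate does not degrade when $h \gg q_S$) rather than the naive $|T(a)| \leq h$ bound; the remaining manipulations needed to turn a $\prod_p(A_p + B_p)$ expansion into the variance formula are purely bookkeeping.
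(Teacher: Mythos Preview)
Your proof is correct and is essentially the same argument as the paper's. The paper packages the Fourier/CRT expansion as Lemma~\ref{Lemma5} and then deduces Theorem~\ref{th-0.2} from the more general Lemma~\ref{t4.1}; your ``support $S$'' indexing is exactly the paper's sum over divisors $r\mid q$ (with $r=q_S$), your bound $\sum_{(b,q_S)=1}|T|^2\le q_Sh$ is the paper's use of \eqref{Ex} (cited from \cite[Lemma~4]{mv}), and your final step of completing $\sum_{S}$ to a product is the paper's line $\prod_{p\mid q}\bigl((1-c'_p)^2+c_p^2\bigr)$ in the proof of Lemma~\ref{t4.1}.
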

\begin{remark}
 Note that $c_p$ can never be too small. In fact one can get a lower bound $c_p> 1/2.$ As a result the right hand side of \eqref{eq-c_p} is always bigger than $h/2^{\omega(q)}.$
\end{remark}

\begin{remark}
Note that the trivial upper bound on ${\rm \bf Var}(\mathcal{ Y}_h)$ is ${\rm \bf E}(\mathcal{ Y}_h)^2.$ In section 3 we prove a  more general result without the restriction $|\Omega_p|=(p-1)/2$ (see Lemma \ref{t4.1}).
\end{remark}
\begin{remark}
 By taking $\Omega_p$ equal to the set of quadratic non-residues in Theorem \ref{th-0.2}, we obtain Theorem \ref{th0-2}.
\end{remark}

\noindent Returning to the inverse conjecture for the large sieve, Green and Harper \cite{G.H} proved the conjecture when $\Omega_p$
is an interval and gave a non-trivial result when $\Omega_p$ has certain additive structure. This brings us to the definition of a set with additive structure.
\begin{definition}[\textbf{Additively structured}]
For $p$ a prime number we say that $\Omega_p\subseteq \mathbb{Z}/p\mathbb{Z} $ is additively structured if there exist $a\neq 0$ modulo $p$,
\begin{equation}
    \label{eq1.6.1}
    \Big|\sum_{x \in \Omega_p } e\Big( \frac{ax}{p} \Big) \Big| \geq C_{p} p,
\end{equation}
where $C_p$ depends on $p$ and here we consider $C_p \gg \log^{-1} p.$
\end{definition}
\begin{remark}
 Note that additively structured is the extreme opposite of not additively structured, since the opposite of not additively structured means every set has a Fourier coefficient just bigger than $p^{1/2 + \epsilon},$ while being additively structured means there exists a Fourier coefficient bigger than $p^{1 - \epsilon}.$
\end{remark}

Let $\displaystyle{\Omega_p=\{0, 2, 4, \ldots,  p-1\}},$  for all $p|q$. Note that this set is additively structured since
for $a=(p+1)/2$ we have
 $$\Bigg| \sum_{x\in \Omega_p} e\Big(\frac{xa}{p}\Big) \Bigg|= \Bigg|\frac{e\big(\frac{1}{2p}\big)+1}{e\big(\frac{1}{p}\big)-1}\Bigg| \geq \frac{p}{\pi}.$$

\noindent For the set $A$ we prove a result which shows that $A$ is far from being well distributed.
\begin{theorem}
\label{coro1}
Let $\Omega_p=\{0, 2, 4, \ldots , p-1\}$ and $\mathcal{ Y}_h$ be as \eqref{eq-r0.5}. Assume $\displaystyle{q=p_1, \ldots, p_{\lfloor \log X\rfloor}}  $, where $X<p_i <2X$ and $|p_2-p_1|\ll \log p_1$. Then for every integer $h < \frac{X^2}{\log X }$ we have that
$$
\frac{1}{q}\sum_{n=0}^{q-1}\Bigg( \sum_{\substack{m \in [n, n+h] \\ m \notin \Omega_p \text{ mod } p\\ \forall p|q}}1- h\prod_{p|q} \bigg( \frac{p+1}{2p}\bigg) \Bigg)^{2}  \gg \Big(\frac{h}{2^{\omega(q)}P}\Big)^2, $$
or equivalently
$${\rm \bf Var}(\mathcal{ Y}_h) \gg {\rm \bf E}(\mathcal{ Y}_h)^2.  $$
\\
\end{theorem}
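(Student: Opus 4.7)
The plan is to lower bound $\mathrm{Var}(\mathcal{Y}_h)$ by two-point correlations, exploiting that the primes $p \mid q$ lie in a short interval. Since $\mathrm{E}[(\mathcal{Y}_h - c)^2] \geq \mathrm{Var}(\mathcal{Y}_h)$ for any $c$, and the constant $h \prod_{p|q}(p+1)/(2p)$ differs from $\mathrm{E}[\mathcal{Y}_h] = h P_A$ (with $P_A := \prod(p-1)/(2p) = P \cdot 2^{-\omega(q)}$) by only $O(hP_A \log X/X)$, it suffices to prove $\mathrm{Var}(\mathcal{Y}_h) \gg (h/(2^{\omega(q)}P))^2$. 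By Bienaym\'e and translation-invariance,
\[
\mathrm{Var}(\mathcal{Y}_h) = \sum_{|d| < h}(h - |d|)\bigl(R(d) - P_A^2\bigr), \qquad R(d) := \Pr_n\bigl[\{n, n+d\} \subseteq A\bigr],
\]
and by CRT $R(d) = \prod_{p|q} r_p(d)$ where $r_p(d) = |B_p \cap (B_p - d)|/p$ with $B_p = \{1, 3, \dots, p-2\}$. A direct case analysis on the parity of $d_p := d \bmod p \in [0, p)$ gives
\[
r_p(d) = \begin{cases} (p - 1 - d_p)/(2p) & d_p \text{ even}, \\ (d_p - 1)/(2p) & d_p \text{ odd}. \end{cases}
\]

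The structural input is that under the hypothesis, all $p \mid q$ lie in $[X, X + L]$ with $L \ll \log^2 X$. For each integer $k \in [0, X/L]$, the block $I_k := [k(X+L), (k+1)X)$ is contained in $[kp, (k+1)p)$ for every $p \mid q$, so $\lfloor d/p \rfloor = k$ and $d_p = d - kp$. Since every $p$ is odd, $d_p \equiv d + k \pmod 2$, and restricting to $d \equiv k \pmod 2$ forces all $d_p$ to be even. Writing $d = kX + d_0$ with $d_0 \in [kL, X)$, we obtain uniformly $r_p(d) = (1 + O(kL/X))(X - d_0)/(2X)$, so provided $k \leq cX/(\omega(q) L)$ the accumulated multiplicative error stays bounded; for $d_0 \leq X/(2\omega(q))$ one then estimates
\[
R(d) \gg \Bigl(\tfrac{1}{2}\bigl(1 - \tfrac{1}{2\omega(q)}\bigr)\Bigr)^{\omega(q)} \asymp 2^{-\omega(q)} \asymp P_A,
\]
which is exponentially larger than $P_A^2$, giving $R(d) - P_A^2 \gg P_A$ on this sub-block.

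To conclude, note each admissible block contains $\gg X/\omega(q)$ values of $d$ with correct parity and $d_0 \leq X/(2\omega(q))$, each contributing $\gg (h - kX) P_A$ to the variance sum. Summing over $k \in [0, \min(h/X,\, cX/(\omega(q) L))]$ — so that $\sum_k (h - kX) \gg h^2/X$ in the regime $h \leq X^2/(\omega(q) L)$ and $\gg hX/(\omega(q) L)$ otherwise — one obtains respectively
\[
\mathrm{Var}(\mathcal{Y}_h) \gg \frac{h^2 P_A}{\omega(q)} \qquad \text{or} \qquad \mathrm{Var}(\mathcal{Y}_h) \gg \frac{hX^2 P_A}{\omega(q)^2 L}.
\]
Plugging in $\omega(q) = \log X$, $L \ll \log^2 X$, $P_A \asymp 2^{-\log X}$, and $h < X^2/\log X$, a direct substitution shows each bound exceeds the target $(h/(2^{\omega(q)}P))^2 \asymp h^2 \cdot 4^{-\omega(q)}$ by at least a positive power of $X$, finishing the proof.

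The main obstacle will be controlling the multiplicative error $(1 + O(kL/X))^{\omega(q)}$ arising when one takes the $\omega(q)$-th product of approximations of $r_p(d)$; the cutoff $k \leq cX/(\omega(q)L)$ is precisely what keeps this factor $O(1)$. A secondary issue is the narrow transition interval $[kX, k(X+L))$ where $\lfloor d/p\rfloor$ is not uniform across primes, but these occupy only an $L/X$ fraction of each block and may be safely discarded.
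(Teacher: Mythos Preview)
Your approach via two-point correlations is genuinely different from the paper's. The paper works through the Fourier expansion of Lemma~\ref{Lemma5}: it writes the variance as a sum over $r\mid q$ of terms $|E(a/r)\,\mu_{\mathcal{D}^*}(a,r)|^2$, uses that for $a_p=(p\pm1)/2$ one has $|\mu_{\mathcal{D}^*}(a,r)|\ge\prod_{p\mid r}p/\pi$, and then keeps only the \emph{single} divisor $r=p_1p_2$ with $a_{p_1}=(p_1+1)/2$, $a_{p_2}=(p_2-1)/2$; the hypothesis $|p_2-p_1|\ll\log p_1$ gives $\|a/r\|=\|\tfrac{1}{2p_1}-\tfrac{1}{2p_2}\|\ll(\log X)/X^2\ll 1/h$, hence $|E(a/r)|\gg h$, and the bound follows. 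So the paper uses nothing about $p_3,\ldots,p_{\lfloor\log X\rfloor}$ beyond $X<p_i<2X$.

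This is where your argument has a real gap. You assert that ``all $p\mid q$ lie in $[X,X+L]$ with $L\ll\log^2 X$'', but the stated hypothesis only says $X<p_i<2X$ together with $|p_2-p_1|\ll\log p_1$; only \emph{two} of the primes are assumed close. Your block decomposition for $k\ge 1$ genuinely requires the short-interval assumption: if the primes are spread over $(X,2X)$ then for $d\in(X,2X)$ the floor $\lfloor d/p\rfloor$ equals $0$ or $1$ according as $p>d$ or $p\le d$, so the parities of the residues $d_p$ cannot be aligned across all $p\mid q$, and the set of $d$ with all $d_p$ even shrinks to the trivial ranges $d<\min_i p_i$ (even) and $d\ge\max_i p_i$ (odd). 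Restricting to the $k=0$ block your argument is fine and gives $\mathrm{Var}(\mathcal{Y}_h)\gg (X/\omega(q))\cdot h\cdot P_A$, but comparing with the target $h^2P_A^2$ this only suffices for
\[
h\ \ll\ \frac{X}{\omega(q)\,P_A}\ \asymp\ \frac{X\cdot 2^{\omega(q)}}{\log X}\ \asymp\ \frac{X^{1+\log 2}}{\log X},
\]
which falls short of the claimed range $h<X^2/\log X$. Under your stronger reading of the hypothesis the correlation argument is valid and pleasantly elementary, but as stated the theorem needs the Fourier input (or some other idea exploiting only the pair $p_1,p_2$).
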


Theorem \ref{th-0.2} shows a connection between non-additive structure in sets $\Omega_p$ and well distribution of $A$.
\noindent Theorems \ref{coro1} shows a connection between the additive structure of the sets $\Omega_p$ and $A$ not being well distributed.
Recall that $A$ is obtained by sieving out the congruence classes in $\Omega_p.$  In the inverse conjecture for the large sieve, there is a similar connection between the size of the sifted set and the additive structure of $\Omega_p$. More precisely, if the size of the sifted set $A$ is not too small, then $A$ is the image of a quadratic polynomial and from Example 1 we know that the image of a quadratic polynomial is not additively structured. Thus if the size of $A$ is large then $A$ is not additively structured.  Inspired by this observation it seems natural to refine the inverse conjecture for the large sieve in terms of the additive structure of $A.$ Now we state our conjecture. \\
\begin{conjecture}
Let $A$ be the subset of $[1, X]$  obtained by sieving out congruence classes in $\Omega_p$ for $p < X^{1/2}$.
Moreover assume that for each $p,$ $\Omega_p$ is additively structured i.e. $\Omega_p$ has the property that there exist $a \neq 0$ modulo $p$ such that
\begin{align*}
    \bigg| \sum_{x \in \Omega_p} e\big(\frac{ax}{p} \big) \bigg|\gg C_p p,
\end{align*}
with $C_p \gg \log^{-1} p.$ Then
\begin{align*}
    |A| \ll X^{\epsilon}.
\end{align*}
\end{conjecture}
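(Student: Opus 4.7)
The plan is to leverage the additive structure of each $\Omega_p$ to reduce the conjecture to the case of sieving by intervals, which was settled by Green and Harper in \cite{G.H}. The key observation is that a Fourier coefficient $|\sum_{x \in \Omega_p} e(a_p x/p)| \geq C_p p$ with $C_p \gg 1/\log p$, together with $|\Omega_p| = (p-1)/2$, forces the dilate $a_p \cdot \Omega_p \pmod p$ to be concentrated near an arc of $\mathbb{Z}/p\mathbb{Z}$.

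First, for each prime $p \mid q$ I would fix $a_p \in (\mathbb{Z}/p\mathbb{Z})^{*}$ realizing the Fourier bound \eqref{eq1.6.1}. A quantitative near-equality in the triangle inequality --- the $(p-1)/2$ unit vectors $e(a_p x/p)$ with $x \in \Omega_p$ must nearly align for the sum to exceed $C_p p$ --- produces an interval $J_p \subset \mathbb{Z}/p\mathbb{Z}$ of length $(p-1)/2$ with $|a_p \cdot \Omega_p \,\triangle\, J_p| = O(p/\log p)$. Setting $\tilde\Omega_p := a_p^{-1}(J_p)$, this identifies each $\Omega_p$ with an arithmetic progression $\tilde\Omega_p$ modulo $p$, up to an $O(1/\log p)$-fraction of exceptional residues.

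Second, I would sift $[1, X]$ by the $\tilde\Omega_p$ in place of the $\Omega_p$. After the per-prime dilations $n \mapsto a_p n \pmod p$, each $\tilde\Omega_p$ becomes an interval of size $(p-1)/2$ in $\mathbb{Z}/p\mathbb{Z}$, so a twisted version of the Green--Harper theorem should yield $|\tilde A| \ll X^{\epsilon}$, where $\tilde A$ is the resulting sifted set. The Fourier-analytic core of the Green--Harper argument should commute with these multiplicative twists, making the adaptation plausible.

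The main obstacle is the passage from $A$ to $\tilde A$. Because $\Omega_p$ and $\tilde\Omega_p$ are incomparable, elements of $A$ may sit inside $\tilde\Omega_p \setminus \Omega_p$ at some primes and inside $\Omega_p \setminus \tilde\Omega_p$ at others, so the two sifted sets have no a priori containment relation. Worse, accumulating the $O(1/\log p)$-proportion of exceptional residues across all $\pi(X^{1/2})$ primes dividing $q$ could easily allow $|A|$ to far exceed $|\tilde A|$. Overcoming this will likely demand either an iterative Bohr-set refinement of $\tilde\Omega_p$ drawn from additive combinatorics, or a hybrid argument that applies a twisted Green--Harper on a well-chosen subfamily of primes while handling the exceptional residues via the large sieve on the remainder.
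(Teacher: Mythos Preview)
The statement you are attempting to prove is labelled a \emph{Conjecture} in the paper, and the paper offers no proof of it. The author presents it as a refinement of the inverse conjecture for the large sieve and only remarks that Green and Harper obtain the partial bound $|A|\ll X^{1/2-c}$ under a quadruple (equivalently, large Fourier coefficient) hypothesis. There is therefore no ``paper's own proof'' to compare against; what you have written is a proposed attack on an open problem.

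On the merits of your plan, Step~1 is quantitatively incorrect. A Fourier coefficient of size $C_p p$ with $C_p\gg 1/\log p$ is \emph{not} near-equality in the triangle inequality: the sum has $(p-1)/2$ unit terms, so you are only asserting that the normalized sum has magnitude $\gg 2/\log p$, which is tiny. Concretely, let $\Omega_p=I\cup R$ where $I\subset\mathbb{Z}/p\mathbb{Z}$ is an interval of length $\lfloor p/\log p\rfloor$ and $R$ is any set of size $(p-1)/2-|I|$ whose nontrivial Fourier coefficients are $O(\sqrt{p})$ (a generic set works). Then
\[
\Big|\sum_{x\in\Omega_p}e\Big(\frac{x}{p}\Big)\Big|\ge \Big|\sum_{x\in I}e\Big(\frac{x}{p}\Big)\Big|-O(\sqrt{p})\gg \frac{p}{\log p},
\]
so the hypothesis holds with $a_p=1$, yet for every interval $J_p$ of length $(p-1)/2$ one has $|\Omega_p\triangle J_p|\gg p$, not $O(p/\log p)$. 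Thus the dilate $a_p\cdot\Omega_p$ need not be close to an arc, and your reduction to the Green--Harper interval case does not get off the ground.

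Even granting a corrected structural statement (say, correlation with a Bohr set rather than proximity to an interval), the difficulty you yourself flag in Step~4 is the genuine obstruction: an $O(1/\log p)$ proportion of exceptional residues at each of $\pi(X^{1/2})\asymp X^{1/2}/\log X$ primes is far too much slack to control by any current sieve or additive-combinatorial technique, and there is no mechanism in your outline for recovering from it. In short, your proposal identifies the right circle of ideas (Green--Harper, Bohr-set structure from a large Fourier coefficient) but neither step of the reduction goes through, and the conjecture remains open.
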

\noindent Harper and Green \cite{G.H} proved a non-trivial bound for the size of $A$ in the above conjecture. They  proved that if  $\Omega_p$ has many quadruples $(x_1, x_2, x_3, x_4)$ such that $x_1+ x_2= x_3+ x_4,$ then there exists $c>0$ such that $|A| \ll X^{1/2 - c}.$  Note that the quadruple condition is equivalent to $\Omega_p$ having a  large Fourier coefficient. (Larger than $p^{1-\epsilon}.$)
\\
\\
\noindent To finish this part of the article we state a result regarding the distribution of $\Omega^{\prime}_{K, p}$ from Example 2. Note that
\begin{equation}
\label{th4p0}
|\Omega^{\prime}_{K, p}|=
\begin{cases}
\frac{p+1}{4} &\text{if $p \equiv 3 \text{ mod 4}$},\\
\frac{p-1}{4} &\text{if $p \equiv 1 \text{ mod 4}$.}
\end{cases}
\end{equation}
\begin{theorem}
\label{Thm-M_p}
 Let $\Omega^{\prime}_{K, p}$ be as in \eqref{M_p3}. Then for $h<p/2$ we have
\begin{equation}
 \label{th4p1}
\frac{1}{p}\sum_{n=0}^{p-1}\Bigg( \sum_{\substack{m \in [n, n+h] \\ m \in \Omega^{\prime}_{K, p} }}1- \frac{h}{p}|\Omega^{\prime}_{K, p}|
  \Bigg)^{2}  \gg h^2.
\end{equation}
\end{theorem}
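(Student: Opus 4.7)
The plan is to prove the variance lower bound by Fourier analysis on $\mathbb{Z}/p\mathbb{Z}$, reducing to a single large Fourier coefficient of the indicator of $\Omega'_{K,p}$. Let $g = \mathbf{1}_{\Omega'_{K,p}}$ with Fourier coefficients $\widehat g(\xi) = \sum_{y \in \Omega'_{K,p}} e(-y\xi/p)$, and set $K_h(\xi) = \sum_{m=1}^{h} e(m\xi/p)$. Expanding the square and applying Parseval gives
\[
\frac{1}{p}\sum_{n=0}^{p-1}\Bigl(\mathcal{Y}_h(n) - \tfrac{h}{p}|\Omega'_{K,p}|\Bigr)^{2} = \frac{1}{p^{2}}\sum_{\xi=1}^{p-1}|\widehat g(\xi)|^{2}\,|K_h(\xi)|^{2}.
\]
Since $|K_h(\xi)| = |\sin(\pi h \xi/p)/\sin(\pi \xi/p)| \gg h$ uniformly for $1 \le \xi \le p/(2h)$, retaining a single such frequency $\xi_{0}$ yields $\operatorname{Var}(\mathcal{Y}_h) \gg (h^{2}/p^{2})\,|\widehat g(\xi_{0})|^{2}$. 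The problem therefore reduces to producing a non-zero $\xi_{0} \ll p/h$ with $|\widehat g(\xi_{0})| \gg p$.

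To analyse $\widehat g(\xi)$, use the two-to-one parameterisation $y = x + x^{-1}$: generically each $y \in \Omega'_{K,p}$ is hit by $\{x, x^{-1}\}$, so
\[
2\widehat g(\xi) = \sum_{\substack{1 \le x \le p-1\\ x + x^{-1} < p}} e\!\Bigl(-\tfrac{(x+x^{-1})\xi}{p}\Bigr) + O(1).
\]
The unrestricted analogue $\sum_{x=1}^{p-1}e(-(x+x^{-1})\xi/p)$ is the Kloosterman sum $K(-\xi,-\xi;p)$ and is $O(\sqrt p)$ by Weil. The involution $x \mapsto p-x$ swaps the regions $\{x+x^{-1}<p\}$ and $\{x+x^{-1}>p\}$ while sending the integer sum $x + x^{-1}$ to $2p - (x+x^{-1})$; combining this with the Weil bound shows that the real part of $\widehat g(\xi)$ is $O(\sqrt p)$, and the non-trivial content sits in
\[
\operatorname{Im}\widehat g(\xi) \;\asymp\; \sum_{\substack{1\le x \le p-1\\ x+x^{-1} < p}} \sin\!\Bigl(\tfrac{2\pi(x+x^{-1})\xi}{p}\Bigr).
\]

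The crucial and hardest step is to lower-bound this sine sum by $\gg p$ for a suitable small $\xi_{0}$. The plan is to re-parameterise by the non-negative integer $m$ with $x \cdot x^{-1} = 1 + mp$: for each $m \in [1,\,p/4]$ the contribution comes from divisor pairs $(u,v)$ of $1+mp$ with $u + v < p$, and the resulting sums $u+v$ are confined to the arc $[2\sqrt{mp+1},\,p)$, which is asymmetric about $p/2$. This asymmetry — caused by the box constraint $x + x^{-1} < p$ in the integer lift — is what generates the Fourier peak. Summing the sine contributions across the hyperbolas $uv = 1 + mp$ by divisor-sum estimates, and exploiting the involution $x \mapsto p - x$ to cancel subleading terms, should isolate a genuine main term of size $\gg p$. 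Combining this with the reduction above then gives $\operatorname{Var}(\mathcal{Y}_h) \gg h^{2}$.

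The main obstacle is the last quantitative estimate: the Weil bound controls only the \emph{unrestricted} Kloosterman sum and is blind to the integer-lift condition $x + x^{-1} < p$, so the bias must be extracted by elementary arguments specific to the hyperbolas $uv = 1+mp$. Technical care is needed because the divisor structure of $1+mp$ fluctuates irregularly with $m$, so one must sum over $m \le p/4$ without losing the cancellation-defeating asymmetry produced by the half-plane restriction.
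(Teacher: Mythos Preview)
Your reduction is correct and matches the paper: by Parseval on $\mathbb{Z}/p\mathbb{Z}$ the variance equals $p^{-2}\sum_{\xi\ne 0}|\widehat g(\xi)|^{2}|K_h(\xi)|^{2}$, and since $|K_h(1)|\gg h$ for $h<p/2$ it suffices to show $|\widehat g(1)|\gg p$. Your involution argument that $\operatorname{Re}\widehat g(\xi)=O(\sqrt p)$ is also correct, so the content is indeed in the imaginary part. Up to this point you and the paper agree.

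The gap is in your ``crucial and hardest step''. The hyperbola reparametrisation by $m$ with $uv=1+mp$ is not carried out, and as stated it is not a workable route: the divisor structure of $1+mp$ fluctuates erratically with $m$, and there is no standard technique that extracts a main term of size $p$ from $\sum_{m}\sum_{uv=1+mp,\,u+v<p}\sin(2\pi(u+v)/p)$. You flag this yourself as the main obstacle but do not resolve it, so the theorem is not proved.

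The paper handles this step by a completely different and much cleaner device that avoids divisor sums entirely. One writes the restricted exponential sum as the double sum
\[
\sum_{1\le x,y\le p-1}\mathbf 1_{\{xy\equiv 1\bmod p\}}\;\mathbf 1_{\{x+_{\mathbb Z}y\ge p\}}\;e\!\Big(\frac{-(x+y)}{p}\Big)
\]
and applies two-dimensional Plancherel on $(\mathbb{Z}/p\mathbb{Z})^{2}$. The Fourier transform of $\mathbf 1_{\{xy\equiv 1\}}$ at $(a,b)$ is the Kloosterman sum $S(a,b;p)$, so Weil bounds every nonzero frequency by $O(\sqrt p)$; the transform of the triangular cutoff $\mathbf 1_{\{x+y\ge p\}}e(-(x+y)/p)$ is an elementary geometric sum computable in closed form. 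The entire main term $ip/(2\pi)$ comes from the single product at frequency $(0,0)$, and the remaining frequencies contribute $O(\sqrt p\log p)$. The idea you are missing is to treat the integer-lift constraint $x+_{\mathbb Z}x^{-1}<p$ as a \emph{two-variable} cutoff $\mathbf 1_{\{x+y<p\}}$ on the square $[1,p-1]^{2}$ and Fourier-expand it separately from the hyperbola condition $xy\equiv 1$, rather than trying to resolve it as a one-variable arithmetic constraint on $x$.
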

\noindent In the last part of this article we study the distribution of $s$-tuples of reduced residues.
 Although the following theorem is independent than previous results, the techniques are very similar. In particular Lemma \ref{Lemma5} will be applied in all theorems.\\

\subsection*{Higher central moments for the distribution of $s$-tuples of reduced residues}

Let $$\mathcal{D}=\lbrace h_1, h_2 , \ldots, h_s \rbrace,$$ and $\nu_p(\mathcal{D})$ be the number of distinct
elements in $\mathcal{D}$ mod $p$. We call $\mathcal{D}$  \emph{admissible} if $\nu_p(\mathcal{D})<p$ for all primes $p$.
We call $(a+h_1,\ldots, a+h_s)$ an \emph{$s$-tuple of reduced residues} if each element $a+h_i$ is coprime to $q$.
In our previous results we were only able to calculate the variance and could not obtain any estimate for higher moments.
The reason for this, in a sieve-theoretic language, is that when $|\Omega_p|=(p-1)/2,$ as $q$ tends to infinity the dimension of the sieve also tends
to infinity. However, if we fix our admissible set and look at the distribution of $s$-tuples of reduced residues, then the dimension stays bounded and
consequently we are able to derive results for higher moments. Let $k_q(m)$ be the characteristic function of reduced residues, that is to say
\begin{equation*}
    k_q(m)=
    \begin{cases}
    1 &\text{if $\gcd(m,q)=1$},\\
    0 &\text{otherwise.}
    \end{cases}
\end{equation*}
 The generalization of Erd\H{o}s' conjecture, i.e.
\begin{equation}
\label{er-conj-gen}
V^{\mathcal{D}}_2 (q) = \sum_{\substack{(a_i+h_j, q)=1 \\ h_j \in \mathcal{D}}} (a_{i+1}- a_i )^ 2 \ll qP^{-s},
\end{equation}
 concerns the gap between $s$-tuples of reduced residues. In order to prove the generalization of Erd\H{o}s' conjecture (see~\cite{er} and~\cite{F.A}),
the author in~\cite{F.A} studied the $k$-th moment of the distribution of the $s$-tuples of reduced residues:
Let
\begin{align}
\notag  M^{\mathcal{D}}_k(q, h):=  \sum_{n=0}^{q-1}\left( \sum_{m=1}^{h}k_q(n+m+h_1)\ldots k_q(n+m+h_s)- h\prod_{p|q} \bigg(1-\frac{\nu_p(D)}{p}\bigg) \right)^{k}.
\end{align}
In the case  $s=1,$ i.e. ${\mathcal{D}}=\{0\},$  and $k<2$ this was studied by Hooley ~\cite{H1} who found an upper bound for $M^{\{0\}}_2(q, h).$ Hausman and Shapiro ~\cite{sh} gave an exact formula for $M^{\{0\}}_2(q, h).$ Their formula immediately gives the upper bound $M^{\{0\}}_2(q, h) \leq qhP.$ Finally, for a fixed natural number $k$, Montgomery and Vaughan~\cite{mv} showed
\begin{equation}
\label{mv-main}
 M^{\{0\}}_k(q, h) \leq q(hP)^{k/2}+ qhP.
\end{equation}
 For a fixed admissible set ${\mathcal{D}}$ it was proven in~\cite{F.A} that
\begin{align}
\label{Me-notmain}
    M^{\mathcal{D}}_k(q, h) \ll_{s, k}  qh^{k/2} P^{-2^{ks}+ks}.
\end{align}
This was enough to get the generalization of Erd\H{o}s' conjecture, however the method failed to get bounds as strong as \eqref{mv-main}. In the last section of this paper we improve \eqref{Me-notmain}.
\begin{theorem}
\label{thm4}
Let $\displaystyle P = \phi(q)/q$.
For $\displaystyle h<\exp\big( {\frac{1}{k P^{1/s}}}\big) $, we have
\begin{equation}
    \label{eq7}
    M^{\mathcal{D}}_k(q, h) \ll_{s, k} q(hP^s)^{k/2}
\end{equation}
and in general
\begin{equation}
\label{eq8}
M^{\mathcal{D}}_k(q, h) \ll_{s, k} qh^{k/2}P^{sk-s^2\frac{k}{2}}.
\end{equation}
\end{theorem}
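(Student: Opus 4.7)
The plan is to follow the Fourier-analytic strategy of Montgomery--Vaughan from the classical $s=1$ reduced-residue setting, adapted so the Fourier coefficients carry the dependence on $s$ explicitly. For each prime $p\mid q$ set $\Omega_p=\{-h_1,\ldots,-h_s\}\bmod p$, so that the indicator that none of the $n+m+h_i$ is divisible by $p$ is
\begin{equation*}
\mathbf{1}[n+m\notin \Omega_p\bmod p]=\Big(1-\tfrac{\nu_p(\mathcal{D})}{p}\Big)-\tfrac{1}{p}\sum_{a\not\equiv 0\,(p)}\widehat{\Omega}_p(-a)\,e\!\left(\tfrac{a(n+m)}{p}\right),
\end{equation*}
where $\widehat{\Omega}_p(-a)=\sum_{y\in\Omega_p}e(-ay/p)$ satisfies $|\widehat{\Omega}_p(-a)|\le\nu_p(\mathcal{D})\le s$. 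Taking the product of these indicators over $p\mid q$ and reassembling by CRT expresses $\prod_i k_q(n+m+h_i)$ as a Fourier expansion mod $q$ whose $d=1$ term is the main term $\prod_p(1-\nu_p/p)$. Summing over $m\in[1,h]$ and subtracting this main term yields $Z_h(n):=\sum_{m=1}^h\prod_i k_q(n+m+h_i)-h\prod_p(1-\nu_p/p)$ as a linear combination of $e(an/d)\,\widehat{I_h}(a/d)$ for $d\mid q$, $d>1$, where $\widehat{I_h}(x):=\sum_{m=1}^h e(xm)$ satisfies $|\widehat{I_h}(x)|\le\min(h,1/|2\sin\pi x|)$, with coefficients dominated by $s^{\omega(d)}/d$ times $\prod_{p\mid q/d}(1-\nu_p/p)$.

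Next I raise $Z_h(n)$ to the $k$-th power (assume $k$ even; the odd case follows by H\"older) and sum over $n\in\{0,\ldots,q-1\}$. Orthogonality of additive characters modulo $q$ collapses the result to a sum over $k$-tuples $(d_j,a_j)_{j=1}^k$ obeying the zero-sum constraint $\sum_j a_j/d_j\in\mathbb{Z}$, which by CRT decouples across primes: for every $p\mid q$, writing $T_p=\{j:p\mid d_j\}$ and $k(p)=|T_p|$, we need $\sum_{j\in T_p}(a_j\bmod p)\equiv 0\pmod p$. In particular $k(p)\ne 1$, so every prime that is actually used must host at least two indices; this forced pairing is the structural source of the saving over the trivial expansion.

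For the restricted range $h<\exp(1/(kP^{1/s}))$ I then show that the geometric factors $\widehat{I_h}(a_j/d_j)$ contribute on average a factor $h$ without an unbounded logarithmic loss (the threshold on $h$ is engineered precisely so that sums of the form $\sum_{0<|a|<d}\min(h,d/|a|)$ remain $O(h)$ up to a constant depending on $k$ and $s$). The dominant configurations then consist of $k/2$ disjoint pairs of indices, each pair supported on a single prime and contributing $\nu_p^2 h^2/p\asymp s^2 h^2/p$; summing over which prime hosts each pair and over all pairings of the $k$ indices gives $(hP^s)^{k/2}$, hence \eqref{eq7}. For \eqref{eq8} I drop the range hypothesis and allow configurations with $k(p)>2$; trading the $h$-bound on $\widehat{I_h}$ for the weaker $d/|a|$-bound reduces the per-pair contribution from $hP^s$ to $h\cdot P^{s-s^2/2}$, and iterating over $k/2$ pairs produces the exponent $sk-s^2k/2$ on $P$.

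The main obstacle is the combinatorial bookkeeping in the prime-by-prime reduction: unlike the $s=1$ case of Montgomery--Vaughan, each prime can be hit by up to $k$ of the indices, the mod-$p$ zero-sum constraint genuinely couples all of them, and the geometric weights $\widehat{I_h}(a_j/d_j)$ do not factor over primes dividing $d_j$. This is exactly where Lemma \ref{Lemma5} is applied -- it should provide the sharp estimate for the resulting weighted character sums under the zero-sum constraint -- and where the improvement over the earlier bound \eqref{Me-notmain} is obtained.
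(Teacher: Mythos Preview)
Your Fourier setup is correct and coincides with Lemma~\ref{Lemma5}; raising to the $k$th power and summing over $n$ gives exactly \eqref{eq6}. But the analysis after that point has two real gaps. First, the diagonal configuration (pairs $r_{2i-1}=r_{2i}$ with the $r_{2i}$ pairwise coprime, each an arbitrary squarefree divisor of $q$, not a single prime) already contributes $qh^{k/2}P^{sk-s^2k/2}$, which is \eqref{eq8}, not \eqref{eq7}; and your explanation of the threshold is wrong, since $\sum_{0<|a|<r}\min(h,r/|a|)\asymp h+r\log(r/h)$ is not $O(h)$ uniformly over $r\mid q$. The route to \eqref{eq7} that you are missing is a splitting $q=q_1q_2$ with $q_1=\prod_{p\mid q,\,p\le h^k}p$: the exponential estimate is applied only to the $q_1$-part, a separate \emph{probabilistic} estimate (Lemma~3.1 of \cite{F.A}) handles the $q_2$-part, and Mertens gives $P_1:=\phi(q_1)/q_1\gg 1/(k\log h)$, so the loss $P_1^{-s^2k/2}\ll(k\log h)^{s^2k/2}$ becomes absorbable into $P^{-sk/2}$ exactly when $h<\exp\big(1/(kP^{1/s})\big)$.

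Second, showing that non-diagonal configurations are dominated by the diagonal requires a genuine power saving in $h$, and this is the substantial part of the argument. The paper imports Montgomery--Vaughan's $T_1,T_2,T_3,T_4$ lemma (restated as Lemma~\ref{lemm-v}) and reruns their Lemma~8 with $s^{\omega(r_i)}/\phi_{\mathcal{D}}(r_i)$ in place of $1/\phi(r_i)$, arriving at the intermediate bound \eqref{eq9}. Your final paragraph attributes this step to Lemma~\ref{Lemma5}, but that lemma is only the Fourier identity you already invoked in your opening paragraph; it furnishes no estimate for the constrained sum $\sum_{\sum a_i/r_i\in\mathbb{Z}}\prod_i F(a_i/r_i)$.
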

\begin{remark}
Note that \eqref{eq7} is the best possible upper bound and it matches the upper bound derived from probabilistic estimates (see \cite[Lemma 2.1]{F.A}).
\end{remark}
\noindent The open question that remains here is  whether or not the bound \eqref{eq8} is sharp. In other words, is there an admissible set $\mathcal{D}$ such that for $h \geq \exp\big( {\frac{1}{k P^{1/s}}}\big)$, we have
$$M^{\mathcal{D}}_k(q, h) \gg_{s, k} qh^{k/2}P^{sk-s^2\frac{k}{2}}?$$
\subsection*{Notation}

Throughout the paper we use the symbol $\square$ as an abbreviation for the word ``square''.
For example, ``$a$ is a $\square$ modulo $q$'' reads ``$a$ is a square modulo $q$''.
Also, for functions $g(x)$ and $h(x)$, we use interchangeably Landau's and Vinogradov's notation
$g(x) = O(h(x))$, $g(x)\ll h(x)$ or $h(x) \gg g(x)$ to indicate that there exists a
constant $C > 0$ such that $|g(x)| \leq C|h(x)|$ for all $x$.
We use subscripts such as $\ll_{s,k}$ to indicate that the constant $C$ may depend on
parameters $s, k$.
We let $\phi$ denote the Euler's totient function, defined by $\phi(q)=\# \{1 \leq n \leq q : (n, q)=1 \}.$
We also write $P = \phi(q)/q$ and we let $+_{\mathbb Z}$
denote the addition in $\mathbb Z$, as opposed to modular addition.
 
\section{Main estimate}
 In this section we prove an exponential identity for the indicator function of $s$-tuples of reduced residues.
\begin{lemma}
\label{Lemma5}
Let $\mathcal{D}=\{h_1, \ldots , h_s\}$ be an admissible set. For square-free integers $q$ we have
$$k_q(m+h_1)\ldots k_q(m+h_s)= P_{_{\mathcal{D}}}\sum_{r|q}\frac{\mu(r)}{\phi_{\mathcal{D}}(r)}\sum_{\substack{a<r \\ (a, r)=1}} e\Big(m\frac{a}{r}\Big)\mu_{\mathcal{D}}(a, r),$$
where  $$\mu_{\mathcal{D}}(a, r)=\prod_{p|r}\bigg(\sum_{\substack{{s \in \mathcal{D}_p}}}e\Big(\frac{sa(r/p)_{p}^{-1}}{p}\Big)\bigg),$$
$\displaystyle{\phi_{\mathcal{D}}(r)=\prod_{p|r} (p-\nu_p(\mathcal{D}))},$ $$\displaystyle{ P_{\mathcal{D}}=\frac{\prod_{p|q} \big(p-\nu_p(\mathcal{D})\big)}{q}},$$  $(r/p)_{p}^{-1}$ is the inverse of $r/p$ in $\big(\mathbb{Z}/p\mathbb{Z}\big)^{*},$ and $\mathcal{D}_p$ consists of the reduction of elements of $\mathcal{D}$ modulo $p$.

\end{lemma}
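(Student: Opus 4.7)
The plan is to reduce the product $\prod_i k_q(m+h_i)$ to local factors at each prime $p\mid q$ via CRT, expand each factor using discrete Fourier analysis mod $p$, and then recombine by a second CRT argument. Since $q$ is square-free, the condition $\gcd(m+h_i, q) = 1$ for every $i$ splits prime-by-prime, giving
\[
\prod_{i=1}^s k_q(m+h_i) \;=\; \prod_{p\mid q} \mathbf 1_p(m), \qquad \mathbf 1_p(m) := \mathbf 1\big[m \not\equiv -h_i \pmod{p} \text{ for all } i\big].
\]
For each $p$, I would write $\mathbf 1_p(m) = 1 - \sum_{s\in -\mathcal D_p} \mathbf 1[m\equiv s \pmod p]$, expand the inner indicator by orthogonality $\mathbf 1[m\equiv s\pmod p] = \frac{1}{p}\sum_{a=0}^{p-1} e((m-s)a/p)$, split off the $a=0$ frequency, and change variables $s = -t$ in the inner sum to obtain
\[
\mathbf 1_p(m) \;=\; \frac{p-\nu_p(\mathcal D)}{p} \;-\; \frac{1}{p}\sum_{a=1}^{p-1} e\!\Big(\tfrac{ma}{p}\Big)\sum_{t\in\mathcal D_p} e\!\Big(\tfrac{ta}{p}\Big).
\]

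Next I would multiply this identity over all primes $p\mid q$ and expand the resulting product. Indexing each term of the expansion by the set $r\mid q$ of primes at which the oscillatory piece is chosen (and the mean piece $(p-\nu_p(\mathcal D))/p$ is chosen on $p\mid q/r$), using $(-1)^{\omega(r)} = \mu(r)$ for square-free $r$, and applying the multiplicative identity $\phi_{\mathcal D}(q/r)/(q/r)\cdot 1/r = P_{\mathcal D}/\phi_{\mathcal D}(r)$, one finds
\[
\prod_{p\mid q}\mathbf 1_p(m) \;=\; P_{\mathcal D}\sum_{r\mid q}\frac{\mu(r)}{\phi_{\mathcal D}(r)}\prod_{p\mid r}\Big(\sum_{a_p=1}^{p-1} e\!\Big(\tfrac{ma_p}{p}\Big)\sum_{t\in\mathcal D_p} e\!\Big(\tfrac{ta_p}{p}\Big)\Big).
\]
The tensor product over $p\mid r$ then collapses into a single sum over $a\bmod r$ with $(a,r)=1$ via the CRT bijection $(a_p)_{p\mid r}\leftrightarrow a$ given by $a_p \equiv a(r/p)_p^{-1} \pmod p$, so that $\sum_{p\mid r} a_p/p \equiv a/r\pmod 1$ and the tuples $(a_p)$ with $1 \leq a_p \leq p-1$ correspond bijectively to residues $a$ mod $r$ coprime to $r$. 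Under this substitution the outer exponentials merge to $e(ma/r)$ and the inner Fourier sums assemble into exactly $\mu_{\mathcal D}(a,r)$, yielding the claimed identity.

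The main obstacle is purely bookkeeping: one must track the sign flip $s = -t$ carefully (this is precisely why $\mathcal D_p$ rather than $-\mathcal D_p$ appears inside $\mu_{\mathcal D}$), apply the correct CRT inverse convention $a_p \equiv a(r/p)_p^{-1}\pmod p$ when passing from the local to the global frequency, and verify that the induced map carries $\prod_{p\mid r}\{1,\ldots,p-1\}$ onto $(\mathbb Z/r\mathbb Z)^\times$. No analytic input beyond orthogonality and CRT is needed; the identity is essentially a careful rearrangement of the classical M\"obius/Fourier expansion of the indicator $k_q$.
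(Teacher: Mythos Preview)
Your proposal is correct and arrives at the identity by a route that is closely related to, but somewhat cleaner than, the paper's. The paper starts from the global Ramanujan-type expansion $k_q(m)=\sum_{r\mid q}\frac{\mu(r)}{r}\sum_{b\bmod r}e(mb/r)$, multiplies $s$ such expansions together, and is then faced with an $s$-fold sum over $(r_i,a_i)$ subject to the constraint $\sum_i a_i/r_i\equiv a/r$; only after using CRT to decompose $a/r$ prime-by-prime and a further orthogonality trick does this collapse to the stated formula. You instead localize \emph{first}: since $q$ is square-free, $\prod_i k_q(m+h_i)=\prod_{p\mid q}\mathbf 1_p(m)$, and each local factor already has a two-term Fourier expansion mod $p$; expanding the product over $p$ then produces the sum over $r\mid q$ directly, with the $\mu(r)$ and $1/\phi_{\mathcal D}(r)$ coming out transparently, and a single CRT recombination finishes. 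Both arguments use exactly the same ingredients (orthogonality on $\mathbb Z/p\mathbb Z$ and the CRT isomorphism $(\mathbb Z/r\mathbb Z)^\times\simeq\prod_{p\mid r}(\mathbb Z/p\mathbb Z)^\times$), but your ordering sidesteps the intermediate $s$-fold constrained sum entirely, which makes the bookkeeping shorter.
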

\begin{proof}
The starting point in the method of Montgomery and Vaughan~\cite{mv}
is to use the following Fourier expansion of the indicator function of reduced residues:
$$k_q(m)=\sum_{r\mid q}\frac{\mu(r)}{r}\sum_{0\leq b < r}e\bigg(m\frac{b}{r}\bigg).$$
Using this expansion, we deduce that
\begin{align}
\notag k_q(m+&  h_1) \ldots k_q(m+ h_s) \\ & =\notag \sum_{r_1, r_2, \ldots , r_s \mid q} \frac{\mu(r_1) \ldots \mu(r_s)}{r_1\ldots r_s} \sum_{\substack{0< a_i  \leq r_i  \\ \sum_{i=1}^{s} \frac{a_i}{r_i}= \frac{a}{r}}}e\bigg(\sum_{i=1}^{s}(m+ h_i)\frac{a_i}{r_i}\bigg)\\ & \notag
=\sum_{\substack{r|q \\ a \leq r \\ (a, r)=1}}e\Big(m\frac{a}{r}\Big)\sum_{r_1, r_2, \ldots , r_s \mid q} \frac{\mu(r_1) \ldots \mu(r_s)}{r_1\ldots r_s}\sum_{\substack{0< a_i  \leq r_i  \\ \sum_{i=1}^{s} \frac{a_i}{r_i}= \frac{a}{r}}}e\bigg(\sum_{i=1}^{s} h_i\frac{a_i}{r_i}\bigg).
\end{align}
We fix $a, r$ and therefore it is enough to show that
\begin{align}
\label{eq5}
 &\sum_{r_1, r_2, \ldots , r_s \mid q} \frac{\mu(r_1) \ldots\mu(r_s)}{r_1\ldots r_s}
\displaystyle{\sum_{\substack{0< a_i  \leq r_i  \\ \sum_{i=1}^{s} \frac{a_i}{r_i}= \frac{a}{r}}}}e
\bigg(\sum_{i=1}^{s} h_i\frac{a_i}{r_i}\bigg) \\& \notag =P_{_{\mathcal{D}}}\frac{\mu(r)}{\phi_{\mathcal{D}}(r)} \prod_{p|r}
\bigg(\sum_{\substack{s \in \mathcal{D}_p}}e\Big(\frac{sa(r/p)_{p}^{-1}}{p}\Big)\bigg).
\end{align} To show this, note that we can write $$\frac{a}{r}\equiv \sum_{p|q}\frac{a_p}{p} \hspace{2 mm} \text{(mod }1\text{)}$$
uniquely where $0 \leq a_p <p$. Fixing $p_{0}|r$, we have that
\begin{equation}
\label{uniq}
\displaystyle{\frac{a}{r}\cdot\frac{r}{p_{0}}
\equiv \frac{a}{p_{0}}\equiv \frac{a_{p_{0}}}{p_0}\big(\frac{r}{p_0}\big) \hspace{2 mm}} \text{(mod }1\text{)},
\end{equation}
hence
$\displaystyle{a \equiv a_{p_{0} }\big(\frac{r}{p_{0}}\big) \hspace{2 mm} \text{(mod } p_{0} \text{)}}$.
Since $q$, and consequently $r$, are square-free, $\big(\frac{r}{p_0}, p_0\big)=1$, so for $a_{p_{0}} \neq 0$, we have that  $\displaystyle{a_{p_{0}} \equiv
a\big(\frac{r}{p_0}\big)^{-1}_{p_0} \hspace{2 mm} \text{(mod }p_{0} \text{)}}$.
Using \eqref{uniq}, we can can write the left hand side of the \eqref{eq5} in terms of the prime divisors of $q$.  Therefore \eqref{eq5} is equal to $$\displaystyle{\prod_{p|q} \sum_{q_i|p} \frac{\mu(q_1)\ldots \mu(q_s)}{q_1 \ldots q_s} \displaystyle{\sum_{\substack{0 \leq a_i < q_i \\ \sum \frac{a_i}{q_i}=\frac{a_p}{p}}}e\big(\sum_{i=1}^s h_i\frac{a_i}{q_i}\big)}}.$$
To simplify the condition $\sum \frac{a_i}{q_i}=\frac{a_p}{p}$, we write $$\displaystyle{\sum_{\substack{0 \leq a_i < q_i \\\sum \frac{a_i}{q_i}=\frac{a_p}{p}}}e\big(\sum_{i=1}^s h_i\frac{a_i}{q_i}\big)}= \sum_{v=1}^{p} \frac{1}{p} \displaystyle{\sum_{\substack{0 \leq a_i < q_i }}e\Big(\big(\frac{a_p}{p}-\sum_{i=1}^{s}\frac{a_i}{q_i}\big)v\Big)e\big(\sum_{i=1}^s h_i\frac{a_i}{q_i}\big)}.$$ Therefore \eqref{eq5} is equal to
\begin{align*}
&\displaystyle{\prod_{p|q} \sum_{q_i|p} \frac{\mu(q_1)\ldots \mu(q_s)}{q_1 \ldots q_s}\sum_{v=1}^{p} \frac{1}{p} \displaystyle{\sum_{\substack{0 \leq a_i < q_i }}e\Big(\big(\frac{a_p}{p}-\sum_{i=1}^{s}\frac{a_i}{q_i}\big)v\Big)e\big(\sum_{i=1}^s h_i\frac{a_i}{q_i}\big)}} \\ &=\displaystyle{\prod_{p|q} \sum_{v=1}^{p} \frac{e\big(v\frac{a_p}{p}\big)}{p}\prod_{i=1}^{s} \sum_{q_i|p} \frac{\mu(q_i)}{q_i}\displaystyle{\sum_{\substack{0 \leq a_i < q_i }}e\big(\frac{a_i}{q_i}(h_i -v)\big)}} \\ & = \displaystyle{\prod_{p|q} \sum_{v=1}^{p} \frac{e\big(v\frac{a_p}{p}\big)}{p}\prod_{i=1}^{s}}\Big( 1- \frac{1}{p}\sum_{a \leq p} e( \frac{a}{p}(h_i -v))\Big) \\ & = \displaystyle{\prod_{p|q} \sum_{\substack{v=1 \\ v \not \equiv h_i  \hspace{1 mm}\text{mod} \hspace{1 mm} p \\ h_i \in \mathcal{D} }}^{p} \frac{e\big(v\frac{a_p}{p}\big)}{p}}= \frac{\phi_{\mathcal{D}}(q)r}{q\phi_{\mathcal{D}}(r)}\prod_{p|r} \frac{\mu(p)}{p}\sum_{\substack{s \equiv h_i \text{mod} \hspace{1 mm} p \\ h_i \in \mathcal{D}\\ a_p \neq 0}} e\big(s \frac{a_p}{p}\big).
\end{align*}
The last equality holds since $a_p=0$ for $p\nmid r$, and for $a_p\neq 0$ we have that $$\sum_{\substack{v=1 \\ v \not \equiv h_i  \hspace{1 mm}\text{mod} \hspace{1 mm} p \\ h_i \in \mathcal{D} }}^{p} e\big(v\frac{a_p}{p}\big)=-\sum_{\substack{s  \in \mathcal{D}_p}} e\big(s \frac{a_p}{p}\big).$$
This completes the proof of the lemma.
\end{proof}
\section{Distribution of squares modulo $q$}
In this section we are going to prove Theorem \ref{th-0.2} and Corollary \ref{corr-01}. Before proceeding with the proof we derive a formula for the left hand side of \eqref{eq1.5}.  For $q$ square-free, $x$ is a square modulo $q$ if and only if $x$ is a square modulo $p$
for all primes $p$ dividing $q$. For each $p$ which divides $q$, let $\mathcal{D}_{p}:=\{h_{1,p}, \ldots , h_{\nu_p, p} \}$. By the Chinese Reminder Theorem there exists a set $\mathcal{D}=\{h_1, \ldots , h_s \} $, such that $\mathcal{D} \hspace{2 mm} \text{modulo} \hspace{2 mm} p $ is equal to $\mathcal{D}_{p}$, for all $p|q$. For instance let $h_1, h_2, \ldots h_s$ to be uniquely selected to satisfy the following congruences  $h_i \equiv h_{i,p} \text{ mod } p,$ for all $p|q$. In the case that $i> \nu_p $ and therefore $h_{i,p}$ does not exist, we take $h_{i,p}$ to be equal $h_{\nu_p,p}$. This explains how we can construct the set $\mathcal{D}$.\\
\\
\noindent Now if $$k_q(m+h_1)\ldots k_q(m+h_s)=1,$$
then $m \not \equiv -h_{i,p}$ modulo $p$, for $1 \leq i \leq \nu_{p}$ and for all $p$ dividing $q$. We now let
\begin{equation}
\label{eq5.1}
\mathcal{D}_{p}= \{-n_1, \ldots, -n_{\frac{p-1}{2}} \},
\end{equation}
 where $n_i$'s are quadratic non-residues modulo $p$. From $k_q(m+h_1)\ldots k_q(m+h_s)=1,$ it follows that $m$ is a square modulo $q$. Using Lemma \ref{Lemma5} we have that
\begin{align}
\label{eq5.3}
&\notag k_q(m+h_1)\ldots k_q(m+h_s)=\prod_{p|q} \frac{\frac{p+1}{2}}{p}\sum_{r|q}\frac{\mu(r)}{\prod_{p|r} \frac{p+1}{2}}\sum_{\substack{a \leq r \\ (a, r)=1}} e\Big(m\frac{a}{r}\Big)\mu_{\mathcal{D}}(a, r) \\ & = \frac{1}{2^{\omega(q)} P}\sum_{r|q}\frac{\mu(r)}{\prod_{p|r} \frac{p+1}{2}}\sum_{\substack{a \leq r \\ (a, r)=1}} e\Big(m\frac{a}{r}\Big)\mu_{\mathcal{D}}(a, r),
\end{align}
where $P=\frac{\phi(q)}{q}$. Summing this from $m=n+1$ to $n+h$ and then subtracting the term corresponding to $r=1$  we have
\begin{align}
\label{eq5.3.1}
&\notag \sum_{m=n+1}^{n+h}k_q(m+h_1)\ldots k_q(m+h_s)- \frac{h}{2^{\omega(q)}P}  \\ & = \frac{1}{2^{\omega(q)} P}\sum_{\substack{r|q \\ r>1}}\frac{\mu(r)}{\prod_{p|r} \frac{p+1}{2}}\sum_{\substack{a<r \\ (a, r)=1}} E\Big(\frac{a}{r}\Big)\mu_{\mathcal{D}}(a, r)e\Big(n\frac{a}{r}\Big),
\end{align}
 where
\begin{align}
 \notag & E(x)=\sum_{m=1}^{h} e(mx).
 \end{align}
 We square \eqref{eq5.3.1} and sum from $n=1$ to $q$ to obtain
\begin{align}
\label{eq5.3.2}
& \sum_{n=0}^{q-1}\left( \sum_{m=1}^{h}k_q(n+m+h_1)\ldots k_q(n+m+h_s)- \frac{h}{2^{\omega(q)}P} \right)^{2}=
\\ & \notag \frac{q}{4^{\omega(q)}P^2}\sum_{\substack{r_1 ,\hspace{1 mm} r_2 |q \\ r_1, r_2 >1}}\frac{\mu(r_1)\mu(r_2)}{\prod_{p|r_1} \frac{p+1}{2} \prod_{p|r_2} \frac{p+1}{2}} \displaystyle{ \sum_{\substack{a_i<r \\ (a_i, r_i)=1 \\ i=1, 2 \\ \frac{a_1}{r_1}+\frac{a_2}{r_2} \in \mathbb{Z}}}} E\Big(\frac{a_1}{r_1}\Big)E\Big(\frac{a_2}{r_2}\Big)\mu_{\mathcal{D}}(a_1, r_1)\mu_{\mathcal{D}}(a_2, r_2).
\end{align}
Now we are prepared to prove the Theorem \ref{th0-2}.
\begin{proof}[{\bf Proof of  Theorem \ref{th0-2}}] From the condition $\displaystyle{\frac{a_1}{r_1}+\frac{a_2}{r_2} \in \mathbb{Z}}$ in
\eqref{eq5.3.2} it follows that $r_1=r_2$ and $a_2=r-a_1$, thus we have
\begin{align}
\label{eq5.5}
 &\displaystyle{\sum_{n=0}^{q-1}\bigg( \sum_{\substack{m=1 \\ n+m \hspace{1 mm} \text{is } \square \hspace{1 mm}
 \text{mod}  \hspace{1 mm}q}}^{h} 1 - \frac{h}{2^{\omega(q)}P} \bigg)^{2}} = \frac{q}{4^{\omega(q)}P^2}\sum_{\substack{r |q \\ r >1}}\frac{4^{\omega(r)}}{ \prod_{p|r} (p+1)^{2}} \displaystyle{ \sum_{\substack{a<r \\ (a, r)=1 }}} \bigg| E\Big(\frac{a}{r}\Big)\mu_{\mathcal{D}}(a, r)\bigg|^2.
\end{align}
Now, we need to bound $\mu_{\mathcal{D}}(a, r)$. For each $n_i$ in $\mathcal{D}_p$ in \eqref{eq5.1}, employing the Legendre symbol
$$\bigg(\dfrac{-n_i a \big(r/p\big)^{-1}}{p}\bigg)=-\bigg(\dfrac{-1}{p}\bigg)\bigg(\dfrac{a}{p}\bigg) \bigg(\dfrac{\big(r/p\big)^{-1}}{p}\bigg).$$
Since $a\neq 0$ the sequence $\displaystyle{ \big\{-n_i a \big(r/p\big)^{-1}\big\} }$ is either the sequence of quadratic residues or the sequence of  quadratic non-residues modulo $p$. Using the Gauss bound for exponential sums over quadratic residues (respectively non-residues) \cite[Page 13]{Da}
\begin{equation}
 \bigg|\sum_{\substack{i}}e\Big(\frac{n_ia(r/p)_{p}^{-1}}{p}\Big)\bigg| =\begin{cases}
\frac{\sqrt{p}-1}{2}  &\text{if $\bigg(\dfrac{- a \big(r/p\big)^{-1}}{p}\bigg)=-1$},\\
\frac{\sqrt{p}+1}{2} &\text{otherwise,}
\end{cases}
\end{equation}
if $p \equiv 1$ modulo $4$ and
\begin{equation}
 \bigg|\sum_{\substack{i}}e\Big(\frac{n_ia(r/p)_{p}^{-1}}{p}\Big)\bigg| =\frac{\sqrt{p+1}}{2}
\end{equation}
if  $p \equiv 3$ modulo $4$. Consequently, for $a\neq 0$,
 \begin{equation}
 \label{eq5.6}
 \prod_{p|r} \frac{\sqrt{p}-1}{2} \leq |\mu_{\mathcal{D}}(a, r) | \leq \prod_{p|r} \frac{\sqrt{p}+1}{2}.
\end{equation}
Using this in \eqref{eq5.5} we have the upper bound
\begin{equation}
\label{up}
 \displaystyle{\sum_{n=0}^{q-1}\bigg( \sum_{\substack{m=1 \\ n+m \hspace{1 mm} \text{is } \square
\hspace{1 mm}  \text{mod}  \hspace{1 mm}q}}^{h} 1 - \frac{h}{2^{\omega(q)}P} \bigg)^{2}} \leq \frac{q}{4^{\omega(q)}P^2}
\sum_{\substack{r |q \\ r >1}} \prod_{p|r}\frac{\big(\sqrt{p}+1\big)^2}{  (p+1)^{2}} \displaystyle{ \sum_{\substack{a<r \\ (a, r)=1 }}}
 \bigg| E\Big(\frac{a}{r}\Big)\bigg|^2,
 \end{equation}
 and the lower bound
 \begin{equation}
 \label{down} \frac{q}{4^{\omega(q)}P^2}
\sum_{\substack{r |q \\ r >1}} \prod_{p|r}\frac{\big(\sqrt{p}-1\big)^2}{  (p+1)^{2}} \displaystyle{ \sum_{\substack{a<r \\ (a, r)=1 }}}
 \bigg| E\Big(\frac{a}{r}\Big)\bigg|^2 \leq \displaystyle{\sum_{n=0}^{q-1}\bigg( \sum_{\substack{m=1 \\ n+m \hspace{1 mm} \text{is } \square
\hspace{1 mm}  \text{mod}  \hspace{1 mm}q}}^{h} 1 - \frac{h}{2^{\omega(q)}P} \bigg)^{2}}.
\end{equation}
Using the bound(\cite[Lemma 4]{mv}),
\begin{equation}
\label{Ex}
\displaystyle{ \sum_{\substack{a<r \\ (a, r)=1 }}} \Big| E\Big(\frac{a}{r}\Big)\Big|^2
< r \min(r,h),
\end{equation} and by employing this bound in \eqref{up} we have
$$\displaystyle{\sum_{n=0}^{q-1}\bigg( \sum_{\substack{m=1 \\ n+m \hspace{1 mm} \text{is } \square \hspace{1 mm}  \text{mod}
 \hspace{1 mm}q}}^{h} 1 - \frac{h}{2^{\omega(q)}P} \bigg)^{2}}\leq \frac{q}{4^{\omega(q)}P}h\prod_{p|q}(2+\frac{2p^{3/2}-p-1}{p^2+2p+1})< \frac{q}{2^{\omega(q)}P}h\prod_{p|q}(1+\frac{1}{\sqrt p}). $$ For the
lower bound, let $r> h^2.$ Then we have
$$ \phi(r)h \ll \displaystyle{ \sum_{\substack{a<r \\ (a, r)=1 }}} \Big| E\Big(\frac{a}{r}\Big)\Big|^2 .$$ Therefore,
 $$  \frac{q}{4^{\omega(q)}P}h\sum_{\substack{r>h^2 \\ r|q}}\prod_{p|r} (1-\frac{3}{\sqrt{p}}) \ll \displaystyle{\sum_{n=0}^{q-1}\bigg( \sum_{\substack{m=1 \\ n+m \hspace{1 mm} \text{is } \square \hspace{1 mm}  \text{mod}
 \hspace{1 mm}q}}^{h} 1 - \frac{h}{2^{\omega(q)}P} \bigg)^{2}}.$$
\end{proof}
\begin{proof}[{\bf Proof of  Corollary \ref{corr-01}}] Let
$$L(x)=\# \bigg\lbrace i : 1 \leq i \leq \prod_{p|q}\big(\frac{p+1}{2}\big) \text{ and } s_{i+1}-s_{i}> x
\bigg\rbrace.$$
 Then \begin{equation}
 \label{Corr-proof}
\sum_{s_i < q} (s_{i+1}-s_i)^2 = 2 \int_{0}^{ \infty } L(y)y dy.
\end{equation}

\noindent For $y< 2^{\omega(q)}P^{-1} \log q \prod_{p|q}(1+\frac{1    }{\sqrt{p}})$ we bound \eqref{Corr-proof} trivially. To bound $L(y)$ we note that if
 $s_{i+1}-s_i> h , $ then $$\sum_{\substack{m=1 \\ n+m \hspace{1 mm} \text{is } \square \hspace{1 mm}  \text{mod}
\hspace{1 mm}q}}^{h} 1 - \frac{h}{2^{\omega(q)}P}= -\frac{h}{2^{\omega(q)}P}, $$
for $s_i \leq n \leq s_{i+1}-h.$ Therefore we have
\begin{equation}
\label{corr-prof-1}
 \sum_{s_{i+1}-s_i>h}^{} (s_{i+1}-s_i -h)\big(\frac{h}{2^{\omega(q)}P}\big)^2 \ll \displaystyle{\sum_{n=0}^{q-1}\bigg( \sum_{\substack{m=1 \\ n+m \hspace{1 mm} \text{is } \square \hspace{1 mm}  \text{mod}
 \hspace{1 mm}q}}^{h} 1 - \frac{h}{2^{\omega(q)}P} \bigg)^{2}}.
\end{equation}
  Now if we take $y=[h/2]$ then the left hand side of \eqref{corr-prof-1} is $$\gg L(y)y\big(\frac{y}{2^{\omega(q)}P}\big)^2.$$ Thus, by employing Theorem \ref{th0-2} we get the following bound: $$L(y) \ll \frac{2^{\omega(q)}P}{y^2}\prod_{p|q}(1+\frac{1    }{\sqrt{p}}).$$ Applying this bound in the integral in \eqref{Corr-proof} and the fact that for $y>q,$ $L(y)=0$ completes the proof of the Corollary.
 \end{proof}
\section{The general case}
In this section we will prove Theorems \ref{th-0.2},  \ref{coro1} and \ref{Thm-M_p}. Let $\Omega_p \subset \mathbb{Z}/p\mathbb{Z}$. We are interested in numbers less than $q$ such that, modulo $p$, they do not occupy any congruence classes in $\Omega_p$, i.e.\ $\{m \leq q : m \notin \Omega_p \text{ mod } p\}$. By the Chinese Remainder Theorem there exist $\prod_{p|q} (p-|\Omega(p)|)$ such numbers. A natural question is to ask about their distribution modulo $q$ (see \cite{GK}). Lemma \ref{Lemma5} shows the connection between the distribution of
these numbers and the exponential sum over elements in $\Omega_p$. Let $\mathcal{D}=\{h_1, \ldots , h_s \} $ be a set such that $\mathcal{D}_p =\{-\omega : \omega \in \Omega_p\}$. If $k_q(m+h_1)\ldots k_q(m+h_s)=1$, then $m$ is not congruent to any member of $\Omega_p$ modulo $p$. Now we take a look at the distribution of these numbers. Observe that
\begin{align}
\notag \sum_{m=1}^{h}& k_q(m+h_1)\ldots k_q(m+h_s) \\& =\prod_{p|q} \frac{p-|\Omega_p |}{p}\sum_{r|q}\displaystyle{\frac{\mu(r)}{\prod_{p|r} (p-|\Omega_p |)}}\sum_{\substack{a<r \\ (a, r)=1}} E\Big(\frac{a}{r}\Big)\mu_{\mathcal{D}}(a, r).
\end{align}
By a calculation similar to \eqref{eq5.3.2} we have
\begin{align}
\label{Th-4p}
 &\ \sum_{n=0}^{q-1}\left( \sum_{m=1}^{h}k_q(n+m+h_1)\ldots k_q(n+m+h_s)- h\prod_{p|q} \bigg( \frac{p-|\Omega_p |}{p}\bigg) \right)^{2}  \\&= q\prod_{p|q} \bigg( \frac{p-|\Omega_p |}{p}\bigg)^2 \sum_{ \substack{r \mid q \\ r>1}}  \frac{1}{\prod_{p|r} (p-|\Omega_p|)^2} \sum_{\substack{0 < a \leq r \\ (a, r)=1}}    \left |  E\bigg( \frac{a}{r}\bigg)\mu_{\mathcal{D}}(a, r) \right |^2.\notag
\end{align}
In the next lemma we bound the variance.
\begin{lemma}
\label{t4.1}
Assume that for each $p|q$, $|\Omega_p| = c^{\prime}_p p$ with $(p-| \Omega_p|> p^{1/2 + \epsilon})$,and $|\mu_{\mathcal{D}}(a, p)| < c_{p} \sqrt{p},$ where $\displaystyle{c^{\prime}_p}  <1$. Then we have that $$
\sum_{n=0}^{q-1}\Bigg( \sum_{\substack{m \in [n, n+h] \\ m \notin \Omega_p  \text{ mod } p \\ \forall p|q}}1- h\prod_{p|q} \bigg( \frac{p-|\Omega_p|}{p}\bigg) \Bigg)^{2}  \leq qh\prod_{p|q} \bigg((1-c^{\prime}_p)^2 +c^{2}_{p} \bigg).$$
\end{lemma}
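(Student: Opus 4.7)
The plan is to start from the exact formula \eqref{Th-4p} derived at the beginning of this section, and then bound the right-hand side by exploiting (a) multiplicativity of $\mu_{\mathcal{D}}(a,r)$ combined with the hypothesis $|\mu_{\mathcal{D}}(a,p)|<c_p\sqrt{p}$, and (b) the standard exponential-sum bound \eqref{Ex}.

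First, since $\mu_{\mathcal{D}}(a,r)=\prod_{p\mid r}\bigl(\sum_{s\in\mathcal{D}_p}e(sa(r/p)_p^{-1}/p)\bigr)$ is a product over the prime divisors of $r$, and because for $(a,r)=1$ the element $a(r/p)_p^{-1}$ runs through $(\mathbb{Z}/p\mathbb{Z})^{*}$ as $p\mid r$ is fixed, the hypothesis gives the factorized bound
\begin{equation*}
|\mu_{\mathcal{D}}(a,r)|^{2}\;<\;\prod_{p\mid r}c_p^{\,2}\,p.
\end{equation*}
Inserting this into \eqref{Th-4p} and applying \eqref{Ex} in the form $\sum_{0<a\le r,(a,r)=1}|E(a/r)|^{2}\le rh$ (valid for all $h$), the variance is majorized by
\begin{equation*}
q\prod_{p\mid q}\!\left(\tfrac{p-|\Omega_p|}{p}\right)^{\!2}\sum_{\substack{r\mid q\\ r>1}}\frac{1}{\prod_{p\mid r}(p-|\Omega_p|)^{2}}\cdot rh\prod_{p\mid r}c_p^{\,2}\,p.
\end{equation*}

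Next I would simplify. Since $r=\prod_{p\mid r}p$, the inner product becomes $\prod_{p\mid r}\bigl(c_p^{\,2}p^{2}/(p-|\Omega_p|)^{2}\bigr)=\prod_{p\mid r}c_p^{\,2}/(1-c'_p)^{2}$. The divisor sum is then a multiplicative sum over $r\mid q$ with $r>1$, so using the standard identity $\sum_{r\mid q,\,r>1}\prod_{p\mid r}x_p=\prod_{p\mid q}(1+x_p)-1$ with $x_p=c_p^{\,2}/(1-c'_p)^{2}$, we get
\begin{equation*}
\prod_{p\mid q}(1-c'_p)^{2}\cdot\left[\prod_{p\mid q}\!\left(1+\tfrac{c_p^{\,2}}{(1-c'_p)^{2}}\right)-1\right]=\prod_{p\mid q}\!\bigl((1-c'_p)^{2}+c_p^{\,2}\bigr)-\prod_{p\mid q}(1-c'_p)^{2}.
\end{equation*}
Dropping the (nonnegative) subtracted term and multiplying by $qh$ yields the claimed upper bound.

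The only real issue is bookkeeping: one must check that $\mu_{\mathcal{D}}$ indeed factorizes cleanly over $p\mid r$ and that applying the hypothesis $|\mu_{\mathcal{D}}(a,p)|<c_p\sqrt{p}$ is legitimate for every coprime residue $a$ modulo $p$ — this is immediate from Lemma \ref{Lemma5}. The technical assumption $p-|\Omega_p|>p^{1/2+\epsilon}$ is not used in the estimation itself; it serves only to ensure that the right-hand side is genuinely smaller than the trivial bound ${\rm \bf E}(\mathcal{Y}_h)^{2}$, i.e.\ that the lemma is nonvacuous. Thus no essential obstacle remains; the argument is a direct combination of \eqref{Th-4p}, the multiplicative hypothesis on $\mu_{\mathcal{D}}$, and \eqref{Ex}.
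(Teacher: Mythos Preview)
Your proof is correct and follows essentially the same route as the paper: start from \eqref{Th-4p}, bound $|\mu_{\mathcal{D}}(a,r)|^{2}$ multiplicatively by $\prod_{p\mid r}c_p^{2}p$, apply \eqref{Ex} in the form $\sum|E(a/r)|^{2}\le rh$, and then collapse the divisor sum over $r\mid q$ into an Euler product. Your handling of the $r>1$ constraint (computing the subtracted term and dropping it) is in fact slightly cleaner than the paper's, and your remark that the hypothesis $p-|\Omega_p|>p^{1/2+\epsilon}$ plays no role in the estimate itself is accurate.
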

\begin{proof}
Using the assumptions in Lemma \ref{t4.1} and \eqref{Ex} we have
\begin{align*}
 &\ \sum_{n=0}^{q-1}\left( \sum_{m=1}^{h}k_q(n+m+h_1)\ldots k_q(n+m+h_s)- h\prod_{p|q} \bigg( \frac{p-|\Omega_p |}{p}\bigg) \right)^{2}  \\& \ll q\prod_{p|q} \bigg( \frac{p-c^{\prime}_p p}{p}\bigg)^2 \sum_{ \substack{r \mid q \\ r>1}}  \frac{h\prod_{p|r} (c_p p)^2}{\prod_{p|r} (p-c^{\prime}_p p)^2} = qh\prod_{p|q} (1-c^{\prime}_p)^2 \sum_{ \substack{r \mid q \\ r>1}}  \prod_{p|r}\bigg(\frac{ c_p}{ 1-c^{\prime}_p }\bigg)^2 \\ & < qh\prod_{p|q} (1-c^{\prime}_p)^2  \Bigg(1+\bigg(\frac{ c_p}{ 1-c^{\prime}_p }\bigg)^2 \Bigg)=qh\prod_{p|q} \bigg((1-c^{\prime}_p)^2 +c^{ 2}_{p} \bigg).
\end{align*}
This completes the proof of the lemma.\\
\end{proof}
\noindent
\begin{proof}[{\bf Proof of Theorem \ref{th-0.2}}]
This follows from Lemma \ref{t4.1} by taking $c^{\prime}_p=(p-1)/2p.$  Recall that $c^{\prime}_p=\frac{|\Omega_p|}{p}.$
\end{proof}

\noindent Next we prove Theorem \ref{coro1}:
\begin{proof}[{\bf Proof of Theorem \ref{coro1}}].
Let $\mathcal{D}^{*}=\{h_1, \ldots , h_s \} $ be an admissible set such that $\mathcal{D}^{*}_p= -\Omega_p=\{0, -2, \ldots , -(p-1)\}$. Let $\displaystyle{\frac{a}{r}=\sum_{p|r} \frac{a_p}{p}}$,
 for $\displaystyle{a_p=\frac{p\pm 1}{2}}$. Since $\mathcal{D}^{*}_p=\{0, 2, \ldots , p-1\}$,  applying Lemma \ref{Lemma5} we have that \\
 \begin{equation}
 \label{eq10}
|\mu_{\mathcal{D}^{*}}(a, r)|= \prod_{p|r}\Bigg| \sum_{s\in \mathcal{D}^{*}_p} e\Big(\frac{sa_p}{p}\Big) \Bigg|=\prod_{p|r}
 \Bigg|\frac{e\big(\frac{1}{2p}\big)+1}{e\big(\frac{1}{p}\big)-1}  \Bigg| \geq \prod_{p|r}  \frac{p}{\pi}.
\end{equation}
\\
\noindent Here, similar to the square case (section 2), we have $\displaystyle{P_{\mathcal{D}^{*}}= \frac{1}{2^{\omega(q)}}P}$ and
$\displaystyle{ \phi_{\mathcal{D}^{*}}(r)=\prod_{p|r} \frac{p-1}{2}}.$ Consequently, using \eqref{eq10} we have, similarly to \eqref{eq5.3} and \eqref{eq5.5}, that
\begin{align}
\label{eq 12}
 &\ \notag \sum_{n=0}^{q-1}\left( \sum_{m=1}^{h}k_q(n+m+h_1)\ldots k_q(n+m+h_s)-  \frac{hP}{2^{\omega(q)}} \right)^{2}  \\&= \notag
\frac{qP^2}{2^{2\omega(q)}} \sum_{ \substack{r \mid q \\ r>1}} \Bigg( \frac{1}{ \Big(\prod_{p|r} \frac{p-1}{2}\Big)^2} \Bigg)
\sum_{\substack{  0 < a \leq r \\ (a, r)=1    }}  \Big| E\bigg( \frac{a}{r}\bigg)\mu_{\mathcal{D}^{*}}(a, r)\Big|^2 \\ & \geq \notag  \frac{qP^2}{2^{2\omega(q)}} \sum_{ \substack{r \mid q \\ r>1}}  \frac{4^{\omega(r)}}{ \phi(r)^2}  \sum_{\substack{ a_p= \frac{p\pm 1}{2}  \\ p|r   }}  \Big| E\bigg( \displaystyle{\sum_{p|r} \frac{a_p}{p} }\bigg)\mu_{\mathcal{D}^{*}}(a, r)\Big|^2 \\ & \geq  \frac{qP^2}{2^{2\omega(q)}} \sum_{ \substack{r \mid q \\ r>1}}  \frac{4^{\omega(r)}r^2}{ \phi(r)^2 \pi^{2\omega(r)}}  \sum_{\substack{ p|r}} \Big| E\bigg( \displaystyle{\sum_{p|r} \frac{1}{2}\pm \frac{1}{2p} }\bigg)\Big|^2.
\end{align}
Now, for $r$ with an even number of distinct prime factors and $ \displaystyle{\parallel \sum_{p|r} \frac{\pm1}{p}  \parallel \ll 1/h}$, where $\parallel \cdot \parallel$ denotes the distance to the nearest integer, we have 
$$ \Big| E\bigg( \displaystyle{\sum_{p|r} \frac{1}{2} \pm \frac{1}{2p} }\bigg)\Big|^2 \gg h^2.$$ Consequently  \eqref{eq 12} is $$ \gg \frac{qP^2}{2^{2\omega(q)}}h^2 \sum_{ \substack{r \mid q \\ r>1 \\ \parallel \sum_{p|r} \frac{\pm1}{p}  \parallel \ll 1/h }}  \frac{4^{\omega(r)r^2}}{ \phi(r)^2 \pi^{2\omega(r)}} . $$
Now let $r=p_1p_2$, with $\displaystyle{a_{p_{1}}=\frac{p_1+ 1}{2}}$ and $\displaystyle{a_{p_{2}}=\frac{p_2 - 1}{2}},$ we have $$\parallel  \frac{p_1+ 1}{2p_1}+\frac{p_2- 1}{2p_2}\parallel   =\parallel \frac{1}{2p_1}-\frac{1}{2p_2}\parallel \ll \Big| \frac{\log X}{X^2}\Big| \ll \frac{1}{h},$$ which implies that \eqref{eq 12} is $$\gg \frac{qP^2}{2^{2\omega(q)}}h^2.$$
\end{proof}
\begin{remark}
 We picked $h= \frac{X^2}{\log X }$, so that the expectation of $$\# \{m \in (n, n+h] : m \not \in \mathcal{D}^{*}_p \text{  mod } p,   \text{ for all }\hspace{2 mm} p|q \}=\displaystyle{h\prod_{p|q}\frac{p+1}{2p}}=\frac{X^2P}{2^{\lfloor \log X\rfloor} \log X}$$ is greater than $1$. This is important in order to have the possibility of cancellation inside $$\sum_{n=0}^{q-1}\left( \sum_{m=1}^{h}k_q(n+m+h_1)\ldots k_q(n+m+h_s)-  \frac{hP}{2^{\omega(q)}} \right)^{2}.$$
\end{remark}
\noindent We complete this section with the proof of Theorem \ref{Thm-M_p}.
\begin{proof}[{\bf Proof of  Theorem \ref{Thm-M_p}}] We begin with giving the proof for equation \eqref{th4p0}. Recall that \begin{equation*}
 \Omega^{\prime}_{K, p}:= \big \{ x+ x^{-1} : 1\leq x, x^{-1} \leq p-1 \text{ and } x+_{\mathbb Z} x^{-1} <p\big \},
\end{equation*}
If $x +_{\mathbb Z} x^{-1}<p$ then $(p-x) + _{\mathbb Z} (p-x)^{-1} \geq p,$ Therefore half of the congruence classes modulo $p$ contribute to the size of $\Omega^{\prime}_{K, p}$.  Also, for $y<p$ we have $x +_{\mathbb Z} x^{-1}=x^{-1} +_{\mathbb Z}x =y.$ This means that each $y \in \Omega^{\prime}_{K, p} $ has a double multiplicity, with the exception of $y$ equal to $1+ 1^{-1}$. Considering the fact that for $p \equiv 1$ mod $4,$ there exists an $x$ such that $x^{-1}=p-x$, and therefore $x+x^{-1}=p.$ This completes the proof of equation \eqref{th4p0}. \\

\noindent Now let $\Omega_p:= \big \{-\omega : \omega \in \{0, 1, \ldots, p-1\}\setminus \Omega^{\prime}_{K, p}\big \} =  \{\omega_1, \ldots, \omega_{|\Omega_p|}\}.$
Using \eqref{th4p0} we have $|\Omega_p|=\frac{3}{4}p + O(\frac{1}{p}).$
If $k_p(m+\omega_1)k_p(m+\omega_2)\ldots k_p(m+\omega_{|\Omega_p|})=1,$ then $m \in \Omega_{K, p}. $ We use \eqref{Th-4p} to transform the left hand side of \eqref{th4p1}, and we have \begin{equation}
\frac{1}{p}\sum_{n=0}^{p-1}\Bigg( \sum_{\substack{m \in [n, n+h] \\ m \in \Omega^{\prime}_{K, p} }}1- \frac{h}{p}|\Omega^{\prime}_{K, p}|
  \Bigg)^{2} \gg \frac{1}{p^2} \sum_{\substack{0 < a \leq p-1 }}    \left |  E\bigg( \frac{a}{p}\bigg)\mu_{_{\Omega_p}}(a, p) \right |^2.
\end{equation}
To finish the proof of the theorem it is enough to show that  $\big|  E\big( \frac{1}{p}\big)\mu_{_{\Omega_p}}(1, p) \big  | \gg hp.$ Since $h<p/2$ we have
that $\big|  E\big( \frac{1}{p}\big) \big  | \gg h.$ For $\mu_{_{\Omega_p}}$ we have
 $$\mu_{_{\Omega_p}}(1, p)=\sum_{\substack{x \in \Omega_p}}e\Big(\frac{x}{p}\Big).$$ Recall that $\Omega_{K,p} , \Omega^{\prime}_{K, p}$ are defined by \eqref{Omega_{K,p}} and \eqref{M_p3}.
Therefore if $-\omega \in \Omega_p$ then $\omega \in \big( \mathbb{Z}/p\mathbb{Z} \setminus \Omega_{K,p} \big) \bigcup \big
\{ x+ x^{-1} :  x+_{\mathbb Z} x^{-1} \geq p\big \}. $ We have
 $$\mu_{_{\Omega_p}}(1, p)=\sum_{\substack{x \in \mathbb{Z}/p\mathbb{Z} \setminus \Omega_{K,p}}}e\Big(\frac{-x}{p}\Big) +
 \sum_{\substack{\omega \in  \{ x+ x^{-1} :  x+_{\mathbb Z} x^{-1} \geq p \}}} e\Big(\frac{-\omega}{p}\Big).$$ Using Weil's bound for Kloosterman sums, the first sum above is $O(\sqrt p).$ For the second sum we prove
\begin{equation}
\label{Fourier}
\sum_{\substack{\omega \in  \{ x+ x^{-1} :  x+_{\mathbb Z} x^{-1} \geq p \}}} e\Big(\frac{-\omega}{p}\Big)= \frac{ i p}{2\pi} + O( \sqrt{p}\log{p} ).
\end{equation}
The following argument for \eqref{Fourier} was given by Will Sawin and Noam Elkies on Math Overflow \cite{Ar}. The left hand side of \eqref{Fourier} is equal to
$$\sum_{1 \leq x,y \leq p-1}\mathbf 1_{\{xy=1\} }  e(\frac{-x-y}{p}) \mathbf 1_{\{x+_{_{\mathbb Z}}y \geq p\}}.$$
We use a two dimensional Fourier transform to evaluate the left hand side of \eqref{Fourier}. Let $ \^{A}(a, b)$ be the Fourier transform of
 $\mathbf 1_{\{xy=1\} }$ and $\hat{B}(a, b)$ be the Fourier transform of $\mathbf 1_{\{x+y>p\}} e(\frac{x+y}{p})$.
Then by using Parseval-Plancherel formula, the sum in \eqref{Fourier} is:
\begin{equation}
\label{parsev}
\frac{\sum_{0 \leq a, b \leq p-1} \^{A}(a, b)  \overline{\hat{B}(a, b)} }{p^2}, 
\end{equation}
where
\begin{align*}
& \^{A}(a, b)= \sum_{0 \leq x <p} e\big(\frac{ax+ bx^{-1}}{p} \big)= S(a, b; p),
 \\ & \hat{B}(a, b)= \sum_{\substack{0\leq x,y<p\\ x+y>p}} e\big(\frac{(a-1)x + (b-1)y}{p}\big).
\end{align*}
Note that $\^{A}(a, b)$ is the Kloosterman sum unless $a=b=0$. For $\hat{B}(a, b)$ when $b \neq 1$ we have
\begin{eqnarray*}
 \hat{B}(a, b)  &=& \sum_{1\leq x <p} e\big(\frac{(a-1)x}{p}\big) \bigg( \sum_{p+1-x \leq y \leq p-1} e\big(\frac{(b-1)y}{p}\big) \bigg) \\ & =&\sum_{1 \leq x <p} e\big(\frac{(a-1)x}{p}\big) \frac{ e(b-1) - e(\frac{(b-1) (1-x)}{p})}{e(\frac{b-1}{p})-1}\\&=& \sum_{1 \leq x <p} \bigg(\frac{e(\frac{(a-1)x}{p})}{e(\frac{b-1}{p})-1}- \frac{ e(\frac{(a-b)x+b-1 )}{p})}{e(\frac{b-1}{p})-1}\bigg).
\end{eqnarray*}
The first term in the latter sum is  $\frac{p-1}{e((b-1)/p)-1)}$ if $a = 1$ and $\frac{- 1}{e((b-1)/p)-1}$ otherwise.  The second term in the latter sum is $\frac{(p-1) e((b-1)/p)}{e((b-1)/p)-1}$  if $a=b$ and $\frac{- e((b-1)/p)}{e((b-1)/p)-1)}$ otherwise.
\\
\\
\noindent Note that if $a=b=1$, then ${\hat{B}(1, 1)}$ is  $(p-1)(p-2)/2$. Also if $b=1$ and $a \neq 1$ we have $$\hat{B}(a, b)=\frac{p}{e(\frac{a-1}{p})-1}+1 \ll \frac{p^2}{a}.$$ Now the main term in \eqref{parsev} comes from the contribution of
$\^{A}(0, 0)\overline{\hat{B}(0, 0)}.$ The error term can be handled by using the Weil bound on ${\hat{A}(a, b)}$ for $(a, b) \neq (0, 0)$ and the above elementary estimates for $\overline{\hat{B}(a, b)}$ for $(a, b) \neq (1, 1).$
\end{proof}
\section{Higher central moments of reduced residues modulo $q$}
In this section we will improve the result in \cite{F.A} regarding the higher central moments of $s$-tuples of reduced residues. The improvement comes from using Lemma \ref{Lemma5} to transform characteristic functions of $s$-tuples of reduced residues to an expression in terms of exponential sums. The rest of the proof will follow Montgomery and Vaughan's \cite{mv} arguments (Lemma 7 and 8 in \cite{mv}). The important part of the proof is to estimate the innermost sum in \eqref{eq6}, which we divide into two cases: diagonal and non-diagonal configurations. In the diagonal configuration the estimate derived is good enough for our purposes. In the non-diagonal configuration we use Lemma 7 and 8 in \cite{mv} to save a small power of $h$.
 Let $\mathcal{D}=\{h_1, \ldots , h_s \} $ be a fixed admissible set. By employing Lemma \ref{Lemma5} we have that
\begin{align*}
 &\ \sum_{n=0}^{q-1}\left( \sum_{m=1}^{h}k_q(n+m+h_1)\ldots k_q(n+m+h_s)- hP_\mathcal{D} \right)^{k}  \\&= qP_\mathcal{D}^{k}
\sum_{ \substack{r_i \mid q \\ r_i>1}} \left( \prod_{i=1}^{k} \frac{\mu(r_{i})}{\phi_{\mathcal{D}}(r_{i})}\right)
\sum_{\substack{  0 < a_{i} \leq r_{i} \\ (a_{i}, r_{i})=1   \\ \sum_{i=1}^{k} \frac{a_{i}}{r_{i}} \in \mathbb{Z} }} \left(  E\bigg( \frac{a_{1}}{r_{1}}\bigg)\mu_{\mathcal{D}}(a_1, r_1)\ldots E\bigg( \frac{a_{k}}{r_{k}}\bigg)\mu_{\mathcal{D}}(a_k, r_k) \right),
\end{align*}
where $$\mu_{\mathcal{D}}(a, r)=\prod_{p|r}\bigg(\sum_{\substack{s \equiv h_i \text{mod} \hspace{1 mm} p \\ h_i \in \mathcal{D}}}e\Big(\frac{sa(r/p)_{p}^{-1}}{p}\Big)\bigg).$$
Let $F(x)=\min(h, \frac{1}{\Vert x \Vert})$ where $\Vert x \Vert$ is the distance between $x$ and the closest integer to $x$. We have that $|E(x)|\leq F(x)$.
Since $|\mu_{\mathcal{D}}(a, r)| \leq s^{\omega(r)}$ we have
\begin{equation}
\label{eq6}
M^{\mathcal{D}}_k(q, h) \ll  qP_\mathcal{D}^{k} \sum_{ \substack{r \mid q }} \sum_{\substack{r_i|r \\ r_i >1 \\ [r_1, \ldots r_k]=r}}
\prod_{i=1}^{k} \frac{s^{\omega(r_i)}}{\phi_{\mathcal{D}}(r_{i})} \sum_{\substack{  0 < a_{i} \leq r_{i} \\ (a_{i}, r_{i})=1   \\
\sum_{i=1}^{k} \frac{a_{i}}{r_{i}} \in \mathbb{Z} }}  F\bigg( \frac{a_{1}}{r_{1}}\bigg)\ldots F\bigg( \frac{a_{k}}{r_{k}}\bigg).
\end{equation}
\begin{proof}[\bf {Proof of Theorem \ref{thm4}}]
We use the method in \cite{mv} to bound $$\sum_{\substack{  0 < a_{i} \leq r_{i} \\ (a_{i}, r_{i})=1   \\ \sum_{i=1}^{k}
\frac{a_{i}}{r_{i}} \in \mathbb{Z} }}  F\bigg( \frac{a_{1}}{r_{1}}\bigg)\ldots F\bigg( \frac{a_{k}}{r_{k}}\bigg)$$ in  \eqref{eq6}.
 First we focus on diagonal configuration i.e. $r_{1}=r_{2}, r_{3}=r_{4},  \ldots, r_{k-1}=r_k$ and $r_2, r_4, \ldots, r_k$ are relativity co-prime.
 In the diagonal configuration we have that
\begin{align*}
& \sum_{\substack{  0 < a_{i} \leq r_{i} \\ (a_{i}, r_{i})=1    \\ \sum_{i=1}^{k}  \frac{a_{i}}{r_{i}} \in \mathbb{Z} }}
 F\bigg( \frac{a_{1}}{r_{1}}\bigg)\ldots F\bigg( \frac{a_{k}}{r_{k}}\bigg) \leq \sum_{\substack{  0 < a_{1} \leq r_{1} }}
 F\bigg( \frac{a_{1}}{r_{1}}\bigg)^2 \ldots \sum_{\substack{  0 < a_{k-1} \leq r_{k-1} }}F\bigg( \frac{a_{k-1}}{r_{k-1}}\bigg)^{2} \\ &  \leq r_1 r_3 \ldots r_{k-1}h^{k/2}= [r_1 r_3 \ldots r_{k-1}]h^{k/2}.
\end{align*}
Consequently, the contribution of the the diagonal configuration in \eqref{eq6} is less than
 \begin{align}
\label{4.4.4.4}
\notag & qP_\mathcal{D}^{k} \sum_{r|q} \sum_{ \substack{[r_1, ..., r_k]=r \\ (r_{2i-1}, r_{2j-1})=1 \\ i\neq j }}
\left( r_1\frac{s^{2\omega(r_1)}}{\phi_{\mathcal{D}}(r_{1})^2}\right) \left( r_3\frac{s^{2\omega(r_3)}}{\phi_{\mathcal{D}}(r_{3})^2}\right)
\ldots \left( r_{k-1}\frac{s^{2\omega(r_{k-1})}}{\phi_{\mathcal{D}}(r)^2}\right)h^{k/2} \\ & = \notag qP_\mathcal{D}^{k} \sum_{r|q}
\left( r\frac{(s^{2} \frac{k}{2})^{\omega(r)}}{\phi_{\mathcal{D}}(r)^2}\right)h^{k/2} =qP_\mathcal{D}^{k}  \prod_{p|q}
\bigg(1+\frac{ps^2 \frac{k}{2}}{\big(p-\nu_p(\mathcal{D})\big)^2}\bigg)  h^{k/2}  \\ & \ll qh^{k/2}  P^{sk-s^2\frac{k}{2}}.
 \end{align}
In \eqref{4.4.4.4} we used the fact that the number of $k$-tuples $(r_1, \ldots, r_k)$ with $[r_1, \ldots, r_k]=r$ such that each $p$ divides exactly two of $r_i$
is less than $(k/2)^{\omega(r)}$ (see \cite{mv-bas}). In the non-diagonal configuration Lemma 7 in \cite{mv} allows us to save a small power of $h$. Now we state the Lemma 7 in \cite{mv} and explain how it should be apply. Our aim is to get the following
 \begin{equation}
  \label{eq9}
  M^{\mathcal{D}}_k(q, h) \ll q h^{k/2} P^{sk-s^2\frac{k}{2}}\big(1+ h^{-\frac{1}{7k}}P^{-(s+1)^{k}}\big).
 \end{equation}
 This bound is analogous to \cite[Lemma 8]{mv} and its proof is nearly identical. The key difference is in \eqref{eq6} we have $s^{\omega(r)}/\phi_{\mathcal{D}(r)}$ instead of $1/\phi(r)$. Our main tool is the following lemma.
\begin{lemma}[Montgomery and Vaughan]
\label{lemm-v}
For $k \geq 3$, let $r_1,\ldots, r_k$ be square free numbers with $r_i \geq 1$ for all $i$. Further let $r = [r_1, r_2,..., r_k]$, $d = (r_l, r_2)$, $r_1 = dr^{\prime}_1, r_2= dr^{\prime}_2$. and write $d = st$ where $s|r_3 \ldots r_k$, $(t,r_3r_4 \ldots r_k)= 1$. Then
\begin{equation}
\label{T}
\sum_{\substack{  0 < a_{i} \leq r_{i} \\ (a_{i}, r_{i})=1   \\
\sum_{i=1}^{k} \frac{a_{i}}{r_{i}} \in \mathbb{Z} }}  F\bigg( \frac{a_{1}}{r_{1}}\bigg)\ldots F\bigg( \frac{a_{k}}{r_{k}}\bigg) \ll r_1 \ldots r_k r^{-1}(T_1+T_2+T_3+T_4)
\end{equation}
where
\begin{align*}
 & T_1=h^{-1/20}; \\ & T_2=d^{-1/4} \text{when } r_i > h^{8/9} \text{for all } i, \\ & T_2=0 \text{  otherwise;} \\ & T_3 =s^{-1/2} \text{when } r_i > h^{8/9} \text{ for all } i \text{and } r_1=r_2, \\ & T_3=0 \text{  otherwise;}
\end{align*}
and
\begin{align*}
  \hspace{15 mm}& T_4=\Bigg(\frac{1}{r_1r_2sh^2} \sum_{(\tau, t)=1}F\bigg(\frac{\parallel r^{\prime}_1s\tau \parallel}{r^{\prime}_1s}\bigg)^2F\bigg(\frac{\parallel r^{\prime}_2s\tau \parallel}{r^{\prime}_2s}\bigg)^2\Bigg)^{1/2} \\ & \text{ when } h^{8/9} < r_i \leq h^2 \text{ for }  i = 1,2, t > d^{1/2} \text{and } d < h^{5/9}, \\ & T_4=0 \text{  otherwise.}
\end{align*}
\end{lemma}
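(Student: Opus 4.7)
My plan is to prove the lemma by carefully reducing the $k$-fold sum to a manageable pair sum over $(a_1, a_2)$ with fixed $(a_3, \ldots, a_k)$, and then carrying out a case-by-case analysis based on the sizes of $r_1, r_2, d, s, t$ relative to $h$. This is the approach of Montgomery and Vaughan in the proof of their Lemma 7 in \cite{mv}, which I would adapt directly since the left-hand side of \eqref{T} is exactly the quantity they estimate.

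First I would use the linear relation $\sum_{i=1}^k a_i/r_i \in \mathbb{Z}$ to eliminate one constraint by fixing $(a_3, \ldots, a_k)$ and setting $c := -\sum_{i=3}^k a_i/r_i \pmod 1$. Writing $r_1 = dr_1'$ and $r_2 = dr_2'$ with $(r_1', r_2') = 1$, the residual condition $a_1/r_1 + a_2/r_2 \equiv c \pmod 1$ becomes a single linear congruence modulo $dr_1' r_2'$. Using the factorization $d = st$ with $s \mid r_3 \cdots r_k$ and $(t, r_3 \cdots r_k) = 1$, the Chinese Remainder Theorem splits this congruence into an $s$-part (which couples to the outer variables through their common factor $s$) and a $t$-part (which is essentially free of the outer variables). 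The outer variables, summed trivially, contribute a factor $\prod_{i \geq 3} r_i$ up to logarithmic losses, which together with the prefactor $r_1 \cdots r_k r^{-1}$ on the right of \eqref{T} accounts for the normalization and reduces everything to a pair sum in $(a_1, a_2)$.

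Next I would analyze the resulting pair sum in each regime. The universal term $T_1 = h^{-1/20}$ arises from a crude dyadic decomposition of $F$ combined with standard divisor bounds; this handles in particular the regime where some $r_i \leq h^{8/9}$. For $T_2 = d^{-1/4}$ (valid when all $r_i > h^{8/9}$), one exploits that the congruence forces $a_1 r_2' + a_2 r_1' \equiv c d r_1' r_2' \pmod{dr_1'r_2'}$, so that a Cauchy--Schwarz split combined with the standard $L^2$ estimate $\sum_{a\bmod r} F(a/r)^2 \ll rh$ yields a saving proportional to $d^{-1/4}$. The bound $T_3 = s^{-1/2}$ in the case $r_1 = r_2$ exploits the collapse $r_1' = r_2' = 1$, reducing the pair sum to a single sum in which the free $t$-aspect can be estimated by Plancherel, giving the stronger save. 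Finally, $T_4$ is obtained by writing the pair sum exactly in the intermediate regime $h^{8/9} < r_i \leq h^2$, $t > d^{1/2}$, $d < h^{5/9}$, and then applying Cauchy--Schwarz to the inner sum over $\tau$, so that the displayed square-root appears directly as a majorant.

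The main obstacle, as in Montgomery and Vaughan's original argument, will be the bookkeeping around the CRT splitting $d = st$ and verifying that the four cases exhaust all possibilities and fit together cleanly at the boundaries of the exponents $1/20$, $1/4$, $5/9$, $8/9$. In particular, one must show that the choice of $c$ forces a well-defined congruence $\tau \equiv \tau_0 \pmod t$ whose position is effectively uniform in $(a_3, \ldots, a_k)$ --- this is where the hypothesis $(t, r_3 \cdots r_k) = 1$ becomes indispensable --- and that the decomposition $d = st$ with $s \mid r_3 \cdots r_k$ is unique so no over-counting occurs. Once this is in place, the explicit exponents are tuned precisely so that every regime is handled by one of $T_1, T_2, T_3, T_4$.
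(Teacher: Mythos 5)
The paper does not prove this lemma at all: it is quoted verbatim as Lemma~7 of Montgomery and Vaughan \cite{mv} and used as a black box, so the paper's ``proof'' is a citation. Your plan --- reduce to a pair sum in $(a_1,a_2)$, split $d=st$ by CRT, and do a case analysis producing $T_1,\dots,T_4$ --- does track the shape of Montgomery and Vaughan's original argument, but as a standalone proof it has genuine gaps rather than being a complete alternative derivation.

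Concretely, two steps would fail as written. First, the reduction: you cannot fix $(a_3,\dots,a_k)$ and sum the outer variables ``trivially'' to get $\prod_{i\geq 3} r_i$ up to logarithmic losses. The target bound in \eqref{T} carries the global factor $r^{-1}=[r_1,\dots,r_k]^{-1}$, which couples all $k$ variables, and the statement has no logarithmic factors to absorb the $(\log h)^{k-2}$ your freezing argument would produce; Montgomery and Vaughan instead keep the full $k$-fold structure and factor it through their $L^2$/CRT lemmas (the analogue of \eqref{fund}). Second, the quantitative heart of each case is asserted rather than derived: Cauchy--Schwarz together with $\sum_{a \bmod r}F(a/r)^2\ll rh$ gives no power saving in $d$ at all, whereas the $d^{-1/4}$ and $s^{-1/2}$ savings in \cite{mv} come from finer estimates on $F$ summed along arithmetic progressions and on the multiplicity of solutions of $a_1r_2'+a_2r_1'\equiv c \pmod{dr_1'r_2'}$, and the specific shape of $T_4$ with $\parallel r_1's\tau\parallel$ comes from an explicit analysis in the $t$-aspect, not from a generic Cauchy--Schwarz. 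So the exponents $1/20$, $1/4$, $8/9$, $5/9$ --- the whole content of the lemma --- are nowhere obtained in your sketch. Since the result is exactly \cite[Lemma~7]{mv}, the efficient and correct course here is to cite it, as the paper does, rather than to outline a reproof whose decisive steps are missing.
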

We shall also use the following estimate \cite[Lemma 1]{mv} 
\begin{equation}
\label{fund}
\sum_{\substack{  0 < a_{i} \leq r_{i} \\ (a_{i}, r_{i})=1   \\
\sum_{i=1}^{k} \frac{a_{i}}{r_{i}} \in \mathbb{Z} }}  F\bigg( \frac{a_{1}}{r_{1}}\bigg)\ldots F\bigg( \frac{ a_k}{r_{k}}\bigg) \ll \frac{1}{r}\prod_{i=1}^{k}\Bigg(r_i \sum_{(a_i, r_i)=1} F\bigg(\frac{a_i}{r_i}\bigg)^2\Bigg)^{1/2}. 
\end{equation}
Now we explain how to choose $r_1, r_2$ in order to apply Lemma \ref{lemm-v}. Note that we only need to consider those $k$-tuples $r = (r_1, r_2,, ..., r_k)$ for which $r_i > 1, [r_1,..., r_k] = r$, and each prime divisor of $r$ divides at least two of the $r_i$, since otherwise the sum on the left hand side of \eqref{T} is empty. If $r_i < h^{8/9}$ for some $i,$ then by using \eqref{fund} and \cite[Lemma 4]{mv} we have our desired result. Now suppose that $r_i > h^{8/9}$  for all $i,$ and  set $d_{ij} = (r_i, r_j)$. For each $i$ we can find a $j$, such that
\begin{equation}
\label{12}
 d_{i,j} \geq h^{8/(9k-9)}.
\end{equation}
If there is a pair $(i, j)$ for which this holds and $r_i \neq r_j$, then in Lemma \ref{lemm-v} we choose these to be $r_1, r_2$. We note that if $r_i = r_1$ then $d_{i,j} = r_i > h^{8/9}$, and \eqref{12} holds. Suppose now that \eqref{12} holds only when $r_i = r_j$. If there is a triple $(i, j, k)$ such that $r_i = r_1 =r_k$, then we apply  Lemma \ref{lemm-v} with  $r_i, r_j$ as $r_1, r_2$. Otherwise the $r_i$ are equal in distinct pairs, say $r_1 = r_2, r_3 = r_4, \ldots , r_{k-l} = r_k,$ and $k$ is even. Let $v$ be the product of all those prime factors of $r$ which divide more than one of the numbers $r_2, r_4, r_6,\ldots , r_k.$ Then there exists $i$ such that
\begin{equation}
\label{v-}
\bigg(r_{2i}, \prod_{j\neq i}r_{2j}\bigg)   \geq v^{4/k}.                                                                                                                                                                                                                                                                                                                                                                                                                                                                                                                                                                                                                                                                                                                                                                                                                                                                                                                                                                                                                                                                                                                                                                                                                                                                                                                                                                  \end{equation}
In this case we take $r_1$ and $r_2$ to be $r_{2i-1}, r_{2i}$ and by employing Lemma \ref{lemm-v} we have
\begin{equation}
 \label{eq6-1}
M^{\mathcal{D}}_k(q, h) \ll  qP_\mathcal{D}^{k} \sum_{ \substack{r \mid q }} \frac{1}{r} \sum_{\substack{r_i|r \\ r_i >1 \\ [r_1, \ldots r_k]=r}}
\prod_{i=1}^{k} \frac{s^{\omega(r_i)}r_i}{\phi_{\mathcal{D}}(r_{i})} \big(T_1+T_2+T_3+T_4\big).
\end{equation}
Note that if any of $T_2$, $T_3$, or $T_4$ is non-zero then $d \geq h^{8/(9k-9)}.$ The contribution of $T_1$ to \eqref{eq6-1} is
\begin{equation}
\label{5.9}
\ll qP^{sk}\prod_{p|q}\bigg(1+\frac{\big(1+sp/\phi_{\mathcal{D}}(p)\big)^k}{p}\bigg) \ll qP^{sk-(s+1)^k}h^{-1/20}. 
\end{equation}
By the selection of $r_1, r_2$ we have that if $T_2 \neq 0$ then $d \geq h^{8/(9k-9)}$. Therefore the contribution of $T_2$ to \eqref{eq6-1} is $\ll$
$ qP^{sk-(s+1)^k}h^{-2/(9k-9)}.$ Now for $T_3$ we have $r_1 = r_2$ and $r_1 \geq h^{8/9}$. If $r_1 = r_2 = r_i$ for some $i > 2$, then $s = r > h^{8/9}$, so that $T_3 < T_1$ and therefore the contribution of such $T_3$  to \eqref{eq6-1} is smaller than $T_1$ . It remains to consider the case when $r_1 = r_2, r_3 = r_4,..., r_{k- 1} = r_k.$ Let $r = uv$ where $u$ is the product of those primes dividing exactly one of $r_2, r_4,..., r_k$. Then each prime divisor of $v$ divides two or more of the $r_{2i}$. By our choice of $r_1, r_2$ we have $s\geq v^{4/k}$. Put $r_i = u_iv_i$ where $u_i = (r_i, u)$ and $v_i = (r_i, v)$. Suppose that $u$ and $v$ are fixed, and let $C(u, v)$ denote the set of $(r_1, \ldots, r_k)$ of the sort under consideration. We have $|C(u, v)| \leq d_{k/2}(u)d(v)^{k/2}$. Using the change of variable $r=uv$ and by rearranging the sum in \eqref{eq6-1}, for the contribution of $T_3$  we have
\begin{align}
\label{5.10}
 \sum_{uv|q}\frac{1}{uv} \sum_{(r_1, \ldots, r_k) \in C(u, v)}  \Big(\prod \frac{s^{\omega(r_i)}r_i}{\phi_{\mathcal{D}}(r_{i})}\Big)T_3 \ll & \sum_{uv|q}\frac{d_{k/2}(u)\bigg(\frac{s^{\omega(u)}u}{\phi_{\mathcal{D}}(u)} \bigg)^2 d(v)^{k/2}\bigg(\frac{s^{\omega(v)}v}{\phi_{\mathcal{D}}(v)}\bigg)^k}{uv^{1+2/k}} \\ \notag & = \prod_{p|q}\Bigg(1+  \frac{k s^2p}{2\phi_{\mathcal{D}}^2(p)}+ \frac{2^{k/2}\big(sv/\phi_{\mathcal{D}}(p)\big)^k}{p^{1+2/k}} \Bigg)
\\ & \ll \notag P^{-\frac{s^2k}{2}}.
\end{align}
For the contribution of $T_4$, by the Cauchy inequality, we have
\begin{equation}
\label{5.11}
 \sum_{ \substack{r \mid q }} \frac{1}{r} \sum_{\substack{r_i|r \\ r_i >1 \\ [r_1, \ldots r_k]=r}}
\prod_{i=1}^{k} \frac{s^{\omega(r_i)}r_i}{\phi_{\mathcal{D}}(r_{i})} T_4 \ll \Bigg(\sum_{r|q}\frac{1}{r} \sum_{r_1, \ldots, r_k}\bigg(\prod \frac{s^{\omega(r_i)}r_i}{\phi_{\mathcal{D}}(r_{i})} \bigg)^2\Bigg)^{1/2} \Bigg(\sum_{r|q}\frac{1}{r} \sum_{r_1, \ldots, r_k} T^{2}_4\Bigg)^{1/2}.
\end{equation}
The first factor on the right is made larger as it runs over all $k$-tuples for which $r_i|r$. The larger expression is
\begin{equation}
\label{5.12}
\sum_{r|q}\frac{1}{r} \prod_{p|r}\bigg(1+\big(\frac{sp}{\phi_{\mathcal{D}}(p)}\big)^2\bigg)^k= \prod_{p|q}\Bigg(1+ \frac{\bigg(1+\big(\frac{sp}{\phi_{\mathcal{D}}(p)}\big)^2\bigg)^k}{p}\Bigg) \ll P^{-(s^2 +1)^k}.
\end{equation}
The second factor has been treated precisely in \cite[pp. 324-325]{mv} and it is smaller than $h^{-2/7k}$. By combining \eqref{5.9}, \eqref{5.10}, \eqref{5.11}  and \eqref{5.12} we complete the proof of \eqref{eq9}. Note that we have just sketched the key ideas of the proof, the interested reader can find further details in \cite{mv}. To finish the proof of the Theorem \ref{thm4} we appeal to Lemma 3.1 in \cite{F.A}.  Let $q_1= \prod_{\substack{p|q \\p\leq y }} p$ and $q_2= \prod_{\substack{p|q \\ p>y}} p$, where $y\geq h^k$.
 We set $P_i= \frac{\phi(q_i)}{q_i}$ for $i=1, 2$. Then \cite[Lemma 3.1]{F.A} states that
\begin{align*}
M^{\mathcal{D}}_k(q, h) \ll q(hP^s)^{[k/2]}+qh({P)^s} + qh^{k/2}P_1^{-2^{ks}+ks}P_2^{sk}.
\end{align*}
This lemma is obtained by combining two different estimates of $M^{\mathcal{D}}_k(q, h)$: an exponential estimate and a probabilistic estimate. The exponential estimate stated in \cite[Lemma 1.2]{F.A} gives
$$ M^{\mathcal{D}}_k(q, h) \ll  qh^{k/2} P^{-2^{ks}+ks}.$$
Here we use the estimate \eqref{eq9}, instead of the above estimate and we derive:
$$M^{\mathcal{D}}_k(q, h) \ll_k q(hP^s)^{[k/2]}+qh{P^s} + qh^{k/2}P_{1}^{sk-s^2\frac{k}{2}}\big(1+ h^{-\frac{1}{7k}}P_1^{-(s+1)^k}\big)P_2^{sk}.$$
Now by considering $y=h^{k}$, we have \eqref{eq8} and for $\displaystyle{h<e^{\frac{1}{k P^{1/s}}}}$, we have \eqref{eq7},
which completes the proof.
\end{proof}
\subsection*{Acknowledgement}

For providing the proof of Equation \eqref{Fourier} I would like to thank Will Sawin and Noam Elkies.
For helpful comments and feedback, I am grateful to
Amir Akbary,
Adam Harper, Tristan Freiberg, Kevin Henriot
and
the anonymous referee.
For guidance and helpful discussions, I am grateful to my supervisor Nathan Ng.

{\it E-mail address}: farzad.aryan@uleth.ca
\end{document}